\newcommand{\be}{\begin{enumerate}}
\newcommand{\ee}{\end{enumerate}}
\newcommand{\psn}{{\mathbb P}^{n}}
\newcommand{\psm}{{\mathbb P}^{m}}
\newtheorem{thm}{Theorem}[section]
\newtheorem{rem}[thm]{Remark}
\newtheorem{lem}[thm]{Lemma}
\newtheorem{define}[thm]{Definition}
\newtheorem{cor}[thm]{Corollary}
\newtheorem{prop}[thm]{Proposition}
\newtheorem{example}[thm]{Example}
\newcommand{\lra}{\longrightarrow}
\newcommand{\pf}{\begin{proof}}
\newcommand{\epf}{\end{proof}}
\newcommand*{\longhookrightarrow}{\ensuremath{\lhook\joinrel\relbar\joinrel\rightarrow}}
\begin{document}  

\date{today} 

\title{Ubiquity of complete intersection liaison classes} 
\author[]{Mark R. Johnson and Paolo Mantero$^{1}$}
\address{Department of Mathematical Sciences,
University of Arkansas, Fayetteville AR 72701} 
\email{markj@uark.edu}

\address{Department of  Mathematics, University of California Riverside, 900 University Ave.,  Riverside, CA 92521} 
\email{mantero@math.ucr.edu\newline
\indent{\it URL:} \href{http://math.ucr.edu/~mantero/}{\tt http://math.ucr.edu/$\sim$mantero/}}
\thanks{AMS 2010 {\em Mathematics Subject Classification:} 13C40, 14M06, 14M10.}
\thanks{$^1$ The second author gratefully acknowledges the support of an AMS-Simons Travel Grant.}
 
\begin{abstract}
In this paper, we provide constructions to enumerate large numbers of CI-liaison classes.   To this end, we introduce a liaison invariant and prove several results concerning it, notably that it commutes with hypersurface sections.   This theory is applied to the CI-liaison classes of ruled joins of projective schemes, yielding strong obstructions for such joins to lie in the same liaison class.   A second construction arises from the actions of automorphisms on liaison classes,  allowing the enumeration of many liaison classes of perfect ideals of codimension at least three. 
\end{abstract} 
 
\maketitle

\section{Introduction}
Two proper ideals $I$ and $J$ of a local Gorenstein ring are  directly linked if 
there is a complete intersection $(\underline{\alpha}) \subseteq I\cap J$ such that 
$I = (\underline{\alpha}) :J$ and $J = (\underline{\alpha}) :I$.  This relation generates an equivalence 
relation on the (unmixed) ideals called (complete 
intersection) {\em linkage} or {\em (CI-)liaison}.   
An ideal  is {\em licci}  if it is in the linkage class of a 
complete intersection.    Perfect ideals of codimension two are licci, but this fails 
for codimension at least three.  Given that there are non-licci Cohen-Macaulay ideals, it is natural 
to ask about the different linkage classes.  For example, it is known that there
are infinitely many liaision classes of  ACM curves in $\mathbb P^{4}$ (e.g. \cite{U}). 
However, relatively little of a more general nature seems to be known in the literature.   
In this paper, we construct large families of liaison classes of Cohen-Macaulay ideals of any codimension at least three.

The idea is to study the join of two ideals in a regular local ring, generalizing the idea of the (ruled) join of projective subschemes. 
We show that, when some of the individual ideals are themselves not licci, 
distinct such join ideals must lie in distinct liaison classes 
(Corollary~\ref{thm2}).   Applying this to the ruled join,
we prove that there are at least as many CI-liaison classes of codimension  
$c+3$ ACM subschemes in ${\mathbb P}^{n+5}$ as there are  
generic complete intersection ACM subschemes of codimension $c$ in $\psn$ 
(Corollary~\ref{ACM}).  
Studying the effect of automorphisms on liaison classes, we similarly use 
non-licci ideals to construct many liaison classes in power series over a field, whenever the codimension is 
three or more (Proposition~\ref{aut}).  

To distinguish these liaison classes we introduce an invariant, essentially the ideal
generated by the entire liaison class of the ideal.  Its study, which begins in Section 2, is influenced by the work of Polini and Ulrich \cite{PU} on ideals that are maximal in their linkage class.  The main work of the paper is then to understand this invariant sufficiently in the case of a join (and somewhat more generally) (Theorem~\ref{transversal}), although we are not able to give an exact relation in terms of the given ideals, except for the case of hypersurface sections 
(Theorem~\ref{hyp}). As an application, we obtain strong obstructions for join ideals to lie in the same linkage class, which also yield sufficient conditions for an ideal to be licci (although more of theoretic interest). For instance we prove the following characterization of licci ideals (Theorem \ref{licci}): Let $I$ be an unmixed ideal, and $X$ and $Y$ two new variables, then $(I,X)$ and $(I,Y)$ lie in the same linkage class if and only if $I$ is licci. Also, the theory developed in the paper provides a simple way to construct new ideals that are maximal in their linkage classes from old ones (Corollaries \ref{maxprop} and \ref{maxim}). These ideals have the property that most of their direct links are equimultiple of reduction number one and thus have Cohen-Macaulay blow-up algebras \cite{PU}. 

The results of the paper show a striking difference between complete intersection liaison and {\em Gorenstein liaison}, the latter theory defined by using links defined by Gorenstein ideals, rather than complete intersections.   In fact, 
 our work was inspired by a recent paper of Migliore and Nagel \cite{MN}, who show that any reduced ACM subscheme 
of $\psn$ becomes glicci (in the $G$-liaison class of a complete intersection)
when viewed as a subscheme of $\mathbb P^{n+1}$.   (Our join constructions generalize this viewpoint.)  
More precisely, they show that if $I$ is CM and generically Gorenstein then $(I,X)$ is glicci.  The aforementioned Theorem 3.4 shows, on the other hand, that the generic hypersurface sections $(I,X)$ and $(I,Y)$ are not even in the same CI-liaison class when $I$ itself is not licci.

Our constructions serve to indicate some of the difficulties in working with complete intersection liaison when the codimension is greater than two, and, in some sense, to explain the advantages of Gorenstein liaison.

\section{Preliminaries} 
  
Throughout this work, $(R,m,k)$ will denote a local Gorenstein ring with infinite residue field 
$k$ and $I$ denotes a Cohen-Macaulay (CM) $R$-ideal of positive codimension (unless specified otherwise). 
Recall that an $R$-ideal $I$ is Cohen-Macaulay if $R/I$ is Cohen-Macaulay.    

   We say that an ideal is {\em generically a complete intersection}   
 if it is a complete intersection  locally at each of its associated prime ideals. 

We say that a local ring $S$ is a {\em deformation} of a local ring $R$ if there is an $S$-regular sequence 
$\underline{x}\subseteq S$ such that $S/(\underline{x})\cong R$.  If in addition, $J\subseteq S$ and $I\subseteq R$ are ideals, 
$\underline{x}$ is regular on $S/J$ and $JR = I$, we say that $(S,J)$ is a deformation of $(R,I)$,
or just $J$ is a deformation of  $I$ if the rings are understood.   When $R$ is a $k$-algebra, we always
assume that deformations are also $k$-algebras, and we say that $S$ is a $k$-deformation. An ideal $I$ is {\em deformable to a generic complete intersection} if there exists a deformation $(S,J)$ of $(R,I)$ where $J$ is generically a complete intersection.
 
In the sequel, by linkage we always mean complete intersection linkage. Our main tools to study linkage are generic and universal linkage.  \begin{define}\label{defgeneric}
Let $f_1,\dots , f_n$ be a generating set for an ideal $I$ of grade $g>0$. Let $X$ be a generic
$n \times g$ matrix of variables  over $R$ and  $\underline{\alpha}=\alpha_1,\dots\alpha_g$ be the regular sequence in $R[X]$ defined as $$\alpha_i=\sum_{j=1}^nX_{ij}f_j\qquad \mbox{ for all }i=1,\ldots,g.$$ 

The $R[X]$-ideal $L_1(\underline f)$=$(\underline{\alpha})R[X]:_{R[X]} IR[X]$ is called the {\em first generic link} of $I$.
 Let $R(X)=R[X]_{mR[X]}$. The $R(X)$-ideal $L^1(\underline f)=L_1(\underline f)_{mR[X]}$ is called the  {\em first universal link} of $I$.
\end{define}

Huneke and Ulrich proved that these notions are essentially independent of the chosen generating set \cite[2.11]{SOL}, hence we will write $L_1(I)$ and $L^1(I)$ without referring to a specific generating set of $I$.

For $e\geq 2$, one defines inductively the $e$-th generic link of $I$ as $L_e(I)=L_{1}(L_{e-1}(I))$. Similarly, the $e$-th universal link is defined as $L^e(I)=L^{1}(L^{e-1}(I))$. It can be checked that $L_e(I)$ is linked to $IR[X]$ in $e$ steps, and $L^e(I)$ is linked to $IR(X)$ in $e$ steps or is the unit ideal.
We refer to \cite{SOL} for further information concerning the basic facts on generic and universal linkage we use in the sequel.
    
To begin with, we will need to extend some results from \cite{PU}.
 
\begin{lem} \label{ull}   Let $C$ be an $R$-ideal.  
The following conditions are equivalent.
\be  
\item [(a)] For some (every) $e$-th generic link $L_{e}(I)\subseteq R[X]$  one has $L_{e}(I)\subseteq CR[X].$
\item [(b)] For some (every) $e$-th universal link $L^{e}(I)\subseteq R(X)$  one has $L^{e}(I)\subseteq CR(X).$ 
 
\ee
\end{lem}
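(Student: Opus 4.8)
The plan is to fix one generating set of $I$ and first prove, for the corresponding pair of links, the equivalence $L_e(I)\subseteq CR[X]\Leftrightarrow L^e(I)\subseteq CR(X)$, deferring the ``some (every)'' quantifiers to the end. The guiding observation is that for a fixed generating set the universal link is simply the localization of the generic one: iterating Definition~\ref{defgeneric} and using that colon ideals commute with localization, one has $L^e(I)=L_e(I)\,R(X)$ with $R(X)=R[X]_{mR[X]}$, while $CR(X)=(CR[X])_{mR[X]}$ (see \cite{SOL}). We may assume $C$ is proper, so that $C\subseteq m$ and $R/C$ is local with maximal ideal $\overline m=m/C$. With this in hand, the implication (a)$\Rightarrow$(b) is immediate, since localizing the containment $L_e(I)\subseteq CR[X]$ at $mR[X]$ preserves it and yields $L^e(I)\subseteq CR(X)$.

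The hard part is the reverse descent (b)$\Rightarrow$(a), because localization is not injective in general. I would reduce it to the injectivity of the natural map $R[X]/CR[X]\longrightarrow R(X)/CR(X)$: once this is injective, any $y\in L_e(I)$ whose image lies in $CR(X)$ must already lie in $CR[X]$, whence $L_e(I)\subseteq CR[X]$. Now $R[X]/CR[X]\cong (R/C)[X]$ and $R(X)/CR(X)\cong (R/C)[X]_{\overline m (R/C)[X]}$, so the map is precisely the localization of $(R/C)[X]$ inverting the polynomials lying outside $\overline m(R/C)[X]$, i.e. those with at least one unit coefficient. Over the local ring $R/C$ such a ``primitive'' polynomial is a non-zerodivisor: by McCoy's theorem a zerodivisor of $(R/C)[X]$ is annihilated by a single nonzero constant of $R/C$, which would force all of its coefficients into $\overline m$, contradicting primitivity. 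Hence no nonzero element of $(R/C)[X]$ is killed by this localization, the displayed map is injective, and the descent follows.

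Finally, the ``some (every)'' equivalence inside (a) and inside (b) is the routine part: by the independence of generic and universal links on the chosen generating set \cite[2.11]{SOL}, whether the containment $L_e(I)\subseteq CR[X]$ (resp.\ $L^e(I)\subseteq CR(X)$) holds does not depend on the generating set, so ``for some'' and ``for every'' coincide; combining this with the per-generating-set equivalence (a)$\Leftrightarrow$(b) established above gives the full statement. I expect the genuine obstacle to be isolating the primitive-polynomial observation: the delicate point is that passing from $R[X]$ to $R(X)$ inverts \emph{only} the primitive polynomials, and it is exactly this feature, via McCoy's theorem, that turns an otherwise problematic descent through a localization into an automatic one.
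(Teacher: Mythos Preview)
Your proof is correct, but it diverges from the paper's at the descent step (b)$\Rightarrow$(a). The paper checks the containment $L_e(I)\subseteq CR[X]$ locally at the associated primes of $CR[X]$; since $R[X]/CR[X]\cong (R/C)[X]$ and $R/C\to (R/C)[X]$ is flat with integral fibres, every such associated prime is the extension $pR[X]$ of some $p\in\mathrm{Ass}_R(R/C)$. As $p\subseteq m$, the localization $(L_e(I))_{pR[X]}$ is a further localization of $L^e(I)=L_e(I)R(X)$, and the hypothesis gives the containment there. Your route instead packages the same descent as the injectivity of $(R/C)[X]\to (R/C)(X)$ and proves it directly via McCoy's theorem (primitive polynomials are nonzerodivisors). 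The two arguments are essentially dual: the paper's ``associated primes are extended'' is equivalent to your ``primitive polynomials are nonzerodivisors,'' so neither is deeper, but yours is slightly more self-contained, while the paper's phrasing makes the localization structure (and why $R(X)$, rather than an arbitrary localization, suffices) more transparent. For the ``some/every'' quantifiers, both you and the paper defer to the essential uniqueness of generic and universal links; the paper points to the proof of \cite[1.2]{PU} for the precise mechanism, which you should do as well since \cite[2.11]{SOL} by itself only gives the isomorphism up to adding variables and an $R$-automorphism, and one must note that such operations preserve containment in the extension of $C$.
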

\pf  The fact that each condition is independent of the choice of generic or universal link follows 
similarly as in the proof of \cite[1.2]{PU}.   Since clearly (a)$\implies$(b),
it remains to verify that (b)$\implies$(a).  Let $L_{e}(I)\subseteq R[X]$ be any generic link. 
To show the containment $L_{e}(I)\subseteq CR[X]$, it suffices to show this locally at every associated prime of $CR[X]$.  Such an associated prime $q$ is extended from an associated prime $p$ of $C$ in $R$.  It follows that $L_{e}(I)_{q} = L_{e}(I)_{pR[X]}$ is a further localization of the universal link 
$L^{e}(I)$, so by hypothesis is contained in $CR(X)_{q} =CR[X]_{q}$.  This completes the proof. 
\epf

\begin{prop} \label{PUprop} 
Let $e\ge 1$ be an integer.
\be
\item [(a)] $L^{e}(I) = (1)$ if and only if $I$ is linked to a complete intersection in $e-1$ steps.
\item [(b)]  Let $C$ be a proper $R$-ideal.   Then
 $L^{e}(I)\subseteq CR(X)$ 
 if and only if  $L^{e}(I) \neq (1)$ and  
 every $R$-ideal that is linked to $I$ in $e$ steps is contained  in $C$.  
\ee
\end{prop}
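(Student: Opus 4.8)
The plan is to reduce both statements, via Lemma~\ref{ull}, to assertions about the generic links $L_e(I)\subseteq R[X]$, and then to use the basic fact from \cite{SOL} that the generic link specializes to every actual link. Concretely, if $\psi\colon R[X]\to R$ is an $R$-algebra homomorphism sending the generic matrices to matrices over $R$ whose associated combinations form regular sequences at every stage (call such a $\psi$ admissible), then $\psi(L_e(I))$ is an $R$-ideal linked to $I$ in $e$ steps, and conversely every $R$-ideal linked to $I$ in $e$ steps is of the form $\psi(L_e(I))$ for some admissible $\psi$. This dictionary between admissible specializations and $e$-step links is what drives the argument.

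For part (a) I would induct on $e$. The base case amounts to the computation that $L^{1}(I)=(1)$ precisely when the $g$ generic combinations $\underline{\alpha}$ already generate $IR(X)$; since $(\underline{\alpha})$ is $g$-generated and $\operatorname{grade}I=g$, this happens if and only if $\mu(I)=g$, i.e. $I$ is a complete intersection (linked to a complete intersection in $0$ steps). For the inductive step, writing $L^{e}(I)=L^{1}(L^{e-1}(I))$ and applying the base case to $L^{e-1}(I)$ reduces the claim to identifying ``$L^{e-1}(I)$ is a complete intersection'' with ``$I$ is linked to a complete intersection in $e-1$ steps'' (up to the bookkeeping of the unit-ideal cases, where the chain collapses earlier). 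One direction is specialization; the other uses upper semicontinuity of the minimal number of generators along the family defining the generic link: the universal link sits at the generic point, so $\mu(L^{e-1}(I))\le\mu(J)$ for every actual $(e-1)$-step link $J$, whence some $J$ being a complete intersection forces $L^{e-1}(I)$ to be one as well.

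For part (b), first note that if $L^{e}(I)=(1)$ then, $C$ being proper, $L^{e}(I)\not\subseteq CR(X)$, and both sides of the equivalence are false; so I may assume $L^{e}(I)\neq(1)$. By Lemma~\ref{ull}, $L^{e}(I)\subseteq CR(X)$ is equivalent to $L_e(I)\subseteq CR[X]$, and I work with the latter. The forward direction is then immediate from the dictionary: for any admissible $\psi$ the corresponding $e$-step link satisfies $\psi(L_e(I))\subseteq\psi(CR[X])=C$, since $\psi$ is a surjective $R$-algebra map.

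The reverse direction is the heart of the matter. Assuming every $e$-step link of $I$ lies in $C$, I fix $h\in L_e(I)$ and aim to show $h\in CR[X]$. For every admissible $\psi\colon X\mapsto\lambda$ the hypothesis gives $h(\lambda)=\psi(h)\in\psi(L_e(I))\subseteq C$. If $h\notin CR[X]$, its image $\bar h$ is nonzero in $(R/C)[X]$, and I would derive a contradiction by exhibiting an admissible $\lambda$ with $h(\lambda)\notin C$. This rests on two genericity inputs over the infinite residue field $k$: the matrices $\lambda$ giving admissible specializations form a nonempty, hence dense, open subset of the relevant affine space (this is where grade considerations enter); and the locus where $\bar h$ is nonzero is also dense open, which I would establish by passing to $R/C$, using the Krull intersection theorem to locate the first nonzero graded piece of $\bar h$, and invoking that a nonzero polynomial over a Noetherian local ring with infinite residue field has a nonvanishing point. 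Since the ambient affine space is irreducible, these two dense open sets meet, producing the desired $\lambda$. The main obstacle is precisely this last step: one must check that the regular-sequence conditions defining admissibility are genuinely generic and reconcile them with the non-vanishing of $\bar h$, so that a single $\lambda$ witnesses both simultaneously.
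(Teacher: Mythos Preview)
Your proposal is correct and follows essentially the same route as the paper: both arguments rest on the specialization dictionary between generic/universal links and actual links, and for (b) on intersecting the dense open locus of admissible specializations with the dense open locus where the image escapes $C$. The paper outsources these ingredients to \cite[2.17]{SOL}, \cite[2.4]{AL}, \cite[2.2]{AL}, and \cite[1.4]{PU}, whereas you sketch them directly---in particular, your Krull-intersection argument is exactly what underlies \cite[1.4]{PU}.
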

\pf Let   $I = I_{0}\sim I_{1}\sim\cdots\sim I_{e}$ be any sequence of links.  
 By \cite[2.17]{SOL} there exists a prime $q\in$ Spec $R[X]$ containing the maximal ideal $m$ of $R$, such that $(R[X]_{q}, L_{j}(I)_{q})$ is a deformation of $(R,I_{j})$ for every $1\le j\le e.$
Now suppose that $I_{e-1}$ is a complete intersection.   Then so is $L_{e-1}(I)_{q}$, hence
either $L^{e-1}(I)$ is a complete intersection or $L^{e-1}(I)$ is the unit ideal; in either case  
$L^{e}(I)$ is the unit ideal \cite[2.13]{SOL}.   
Conversely, suppose that $L^{e}(I)$ is the unit ideal.  Let $s$ be the largest integer, $0\le s < e$ 
with $L^{s}(I)\neq (1)$.  Then $L^{s+1}(I) = (1)$, hence $L^{s}(I)$ is a complete intersection. 
By \cite[2.4]{AL} there exists a sequence of links $I_{j}$ as above, for $1\le j\le s$, with 
$\mu(I_{s})  = \mu(L^{s}(I))$.  Hence $I_{s}$ is a complete intersection, and $I$ is linked 
to a complete intersection in $s$ steps, and therefore also  $e-1$ steps.   This proves (a). 

To show (b), suppose first that $L^{e}(I)\subseteq CR(X).$   Then clearly $L^{e}(I)\neq (1)$, and by Lemma~\ref{ull} we have that
$L_{e}(I)\subseteq CR[X]$.  Hence for any sequence 
of links $I_{j}$ as above, by specialization it follows that $I_e\subseteq C$.

For the converse, 
we suppose that $L^{e}(I)\nsubseteq CR(X)$, hence $L_{e}(I)\nsubseteq CR[X]$. 
If $X$ has $N$ entries, and $A\in R^{N}$, we set $\overline{A}$ for the image of $A$ in $k^{N}$, and write $\pi_{A}$ for the $R$-algebra epimorphism 
$\pi_{A}:R[X]\rightarrow R$ that sends $X$ to $A$. 
Now the proof of \cite[1.4]{PU} shows that there is a dense open subset $U$ of $k^{N}$ such that for any $A\in R^{N}$ for which $\overline{A}\in U$, we have $\pi_{A}(L_{e}(I))\nsubseteq C$.

On the other hand, since $L^{e}(I)\neq (1)$, by \cite[2.2]{AL}, there is also a dense open subset 
$V$ of $k^{N}$ such that for all $A\in R^{N}$ for which $\overline{A}\in V$, 
there is a sequence of links of $R$-ideals $I = \pi_{A}(L_{0}(I))\sim \pi_{A}(L_{1}(I))\sim \cdots \sim
\pi_{A}(L_{e}(I))$.   By hypothesis, since the latter ideal is linked to $I$ in $e$ steps, 
$ \pi_{A}(L_{e}(I))\subseteq C$.  
Therefore we obtain the required contradiction, for any $A\in R^{N}$ with $\overline{A}\in U\cap V$. 
\epf
   
As  a consequence of Proposition~\ref{PUprop} we obtain the following result.
 
\begin{thm} \label{PUcor}   
The following conditions are equivalent for an $R$-ideal $C\subsetneq m$.  
\be 
\item[(a)]  $L^{e}(I)\subseteq CR(X)$ for every $e\ge 1$  
\item[(b)]   
  $C$ contains every ideal in the linkage class of $I$.
\ee
\end{thm}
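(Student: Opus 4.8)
The plan is to use Proposition~\ref{PUprop} as the engine, obtaining the stated equivalence by quantifying its two parts over all $e\ge 1$.

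For (a)$\implies$(b), I would take an arbitrary ideal $J$ in the linkage class of $I$; then $J$ is linked to $I$ in some number $e\ge 1$ of steps (taking $e=2$ if $J=I$). Since $C\subsetneq m$, the extension $CR(X)$ is a proper ideal, so the hypothesis $L^{e}(I)\subseteq CR(X)$ already forces $L^{e}(I)\neq (1)$. Proposition~\ref{PUprop}(b) then applies and shows that every $R$-ideal linked to $I$ in $e$ steps is contained in $C$; in particular $J\subseteq C$, which is (b).

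For (b)$\implies$(a), I would fix $e\ge 1$ and use Proposition~\ref{PUprop}(b) in the other direction: it suffices to verify that $L^{e}(I)\neq (1)$ and that every ideal linked to $I$ in $e$ steps lies in $C$. The second assertion is immediate from (b), as any such ideal belongs to the linkage class of $I$. For the first, I argue by contradiction. If $L^{e}(I)=(1)$, then Proposition~\ref{PUprop}(a) produces a complete intersection $A$ linked to $I$ in $e-1$ steps, hence an honest member $A$ of the linkage class of $I$; by (b) we would have $A\subseteq C$.

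The crux, and the step I expect to be the main obstacle, is thus to rule this out by proving that the linkage class of a complete intersection $A=(a_{1},\dots,a_{g})$ of grade $g\ge 1$ can never be contained in a proper ideal $C\subsetneq m$. I would establish this by exhibiting a single escaping direct link. Writing $\underline{a}'=a_{1},\dots,a_{g-1}$, for any $b$ regular modulo $(\underline{a}')$ the sequence $a_{1},\dots,a_{g-1},a_{g}b$ is a regular sequence contained in $A$, and reducing the colon ideal modulo the regular sequence $\underline{a}'$ gives $(a_{1},\dots,a_{g-1},a_{g}b):A=(a_{1},\dots,a_{g-1},b)$, so that $A$ is directly linked to $(a_{1},\dots,a_{g-1},b)$. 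Because $A$ has grade $g$, the ring $R/(\underline{a}')$ has positive depth, so $m$ is not an associated prime of $R/(\underline{a}')$; since also $m\not\subseteq C$, prime avoidance (using that $k$ is infinite) yields an element $b\in m$ that is regular modulo $(\underline{a}')$ and satisfies $b\notin C$. For this $b$ the ideal $(a_{1},\dots,a_{g-1},b)$ contains $b\notin C$, is not contained in $C$, yet lies in the linkage class of $A$ and hence of $I$---contradicting (b). This contradiction gives $L^{e}(I)\neq (1)$ and finishes the argument.
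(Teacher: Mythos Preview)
Your proof is correct and follows the same overall architecture as the paper's: both directions are read off Proposition~\ref{PUprop}, and the only nontrivial point is to show that condition~(b) cannot hold when $I$ is licci (equivalently, when some $L^{e}(I)=(1)$). The paper handles this step slightly differently: it invokes the standard fact that all complete intersections of a fixed codimension~$g$ lie in a single linkage class, deduces that $C$ contains every regular sequence of length~$g$ and hence every nonzerodivisor of $R$, and concludes $C=m$ by prime avoidance. Your argument is a more hands-on version of the same idea: rather than sweeping up \emph{all} complete intersections, you explicitly link $A=(a_{1},\dots,a_{g})$ to $(a_{1},\dots,a_{g-1},b)$ with $b\in m\setminus C$, producing a single escaping link. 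Your route is marginally more self-contained (it avoids citing the ``all complete intersections are linked'' fact and the passage from nonzerodivisors to $C=m$), while the paper's phrasing makes the conceptual reason---the linkage class of a complete intersection is as large as possible---more visible.
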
 
\pf This follows directly from Proposition~\ref{PUprop} once we verify that a licci ideal
$I$ cannot satisfy condition (b).  Indeed, in that case, $C$ would contain every ideal in the linkage
class of $I$, and hence every ideal in the linkage class of a complete intersection.  But all complete 
intersections of the same codimension $g$ belong to the same linkage class, so $C$ contains 
 every complete intersection ideal of codimension $g > 0$.  In particular, $C$ would contain every 
nonzerodivisor of $R$, and therefore $C = m$.  This case is excluded, so this completes the proof. 
\epf

One says that an ideal is {\em maximal in its linkage class} if it contains every ideal in its linkage class. 

According to our previous result, we get the following characterization: an ideal 
 $I\neq m$ is maximal in its linkage class if and only if 
$L^{e}(I)\subseteq IR(X)$ for every $e\ge 1$.   This is a weaker version of a theorem of 
Polini and Ulrich, who show  that this condition also is equivalent to just 
$L^{1}(I)\subseteq IR(X)$ (\cite[1.4]{PU}).  However, in their statement, they omit the condition
$I\neq m$, which is essential when $R$ is regular.   
\bigskip

The previous results motivates the following definition. 

\begin{define}
Let $I$ be an unmixed $R$-ideal. We define 

\begin{equation*} 
 \int I =  
\begin{cases} 
     \text{sum of all ideals in the linkage class of }  I & \text{if I is not licci}\\
     \text{unit ideal} & \text{if I is licci} 
\end{cases} 
\end{equation*}
\end{define}
If we wish to specify the ring, we use the notation $\int_{R} I$. 

We record next some basic properties of this notion.

\begin{thm} \label{basic} Let $I$ be a CM $R$-ideal.  
\be
\item[(a)] $\int I$ is an $R$-ideal containing $I$.  
\item[(b)] If $I$ and $J$ are in the same linkage class then $\int I = \int J$.
\item[(c)]  $\int I$ is the unique smallest ideal with  $L^{e}(I)\subseteq (\int I)R(X)$ for every $e\ge 1.$  
\item[(d)]   If $T$ a flat  local Gorenstein  extension of $R$ with $IT\neq T$ then $\int_{T} IT = (\int_{R} I)T$.  In particular, for all $p\in V(I)$, $\int_{R_{p}} I_{p} = (\int_{R} I)_{p}$.
\ee
\end{thm}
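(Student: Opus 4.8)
The plan is to prove (a)--(c) directly from the definition together with Theorem~\ref{PUcor} and Proposition~\ref{PUprop}, and then to establish (d) by showing that generic and universal links commute with flat base change and feeding this into the characterization obtained in (c).

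For (a), if $I$ is licci then $\int I = (1) \supseteq I$, and if $I$ is not licci then $\int I = \sum_{J} J$, the sum running over the ideals $J$ in the linkage class of $I$; this is a sum of $R$-ideals, hence an $R$-ideal, and it contains $I$ since $I$ lies in its own linkage class. For (b), if $I$ and $J$ are in the same linkage class then they have the same linkage class and the same licci status, so the two sums (or the two unit ideals) coincide. For (c), first suppose $I$ is not licci. Then $\int I$ is by construction the smallest $R$-ideal containing every ideal in the linkage class of $I$, and Theorem~\ref{PUcor} identifies, among ideals $C \subsetneq m$, the condition ``$C$ contains the whole linkage class'' with ``$L^{e}(I) \subseteq CR(X)$ for every $e \ge 1$''; since $m$ itself always satisfies the latter (each $L^{e}(I)$ being a proper, hence $\subseteq mR(X)$, ideal of the local ring $R(X)$), the smallest ideal with that property is exactly $\int I$, whether or not $\int I = m$. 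If instead $I$ is licci, then by Proposition~\ref{PUprop}(a) some $L^{e_{0}}(I) = (1) = R(X)$, so the only ideal $C$ with $L^{e_{0}}(I) \subseteq CR(X)$ is $C = (1) = \int I$, which is therefore the unique smallest ideal with the stated property.

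For (d), the technical heart is the base-change formula $L^{e}(IT) = L^{e}(I)\,T(X)$, in which $R(X) \to T(X)$ is again a flat local Gorenstein extension. This I would obtain by induction on $e$: the first generic link is a colon ideal, and colon ideals of finitely generated ideals commute with flat base change, so $L_{1}(IT) = L_{1}(I)\,T[X]$; passing to the universal link amounts to localizing, and the localizations defining $R(X)$ and $T(X)$ are compatible because $m_{T}T[X] \cap R[X] = mR[X]$, which keeps $R(X) \to T(X)$ flat, local, and Gorenstein so that the induction may proceed. Granting this, one containment is immediate from (c): since $L^{e}(I) \subseteq (\int_{R}I)R(X)$ for all $e$, base change gives $L^{e}(IT) = L^{e}(I)T(X) \subseteq (\int_{R}I)T(X) = ((\int_{R}I)T)\,T(X)$, so $(\int_{R}I)T$ satisfies the defining property of $\int_{T}IT$ over $T$, and the minimality in (c) yields $\int_{T}IT \subseteq (\int_{R}I)T$.

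For the reverse containment I would argue through linkage. When $R \to T$ is a flat local homomorphism it is faithfully flat, so $L^{e}(IT) = (1)$ if and only if $L^{e}(I) = (1)$; hence $I$ is licci exactly when $IT$ is, and in that case both sides equal the unit ideal. If $I$ is not licci, then each ideal $J$ in the linkage class of $I$ produces $JT$ in the linkage class of $IT$ (direct links are defined by colons, which commute with the base change, and $JT \neq T$ because the map is local), whence $JT \subseteq \int_{T}IT$ and, summing, $(\int_{R}I)T = \sum_{J}JT \subseteq \int_{T}IT$. For the localization statement $T = R_{p}$ with $p \in V(I)$, the extension is flat Gorenstein with $I_{p} \neq R_{p}$, and $\int_{R_{p}}I_{p} \subseteq (\int_{R}I)_{p}$ follows exactly as above; for the reverse one localizes the sum $\int_{R}I = \sum_{J}J$, noting the dichotomy that either every $J$ lies in $p$---so the defining links localize and $J_{p}$ stays in the linkage class of $I_{p}$, giving $(\int_{R}I)_{p} \subseteq \int_{R_{p}}I_{p}$---or some $J \not\subseteq p$, in which case $I_{p}$ is linked to the unit ideal and is therefore licci, so both sides equal $R_{p}$. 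The main obstacle I anticipate is a rigorous proof of the base-change formula $L^{e}(IT) = L^{e}(I)T(X)$: one must track the iterated generic extensions through the successive localizations that define the universal links and verify at each stage that the extension remains flat, local, and Gorenstein. A secondary subtlety, relevant only to the localization statement, is that localization need not preserve licci-ness; the clean formula $\int_{R_{p}}I_{p} = (\int_{R}I)_{p}$ nonetheless encodes this, licci-ness of $I_{p}$ corresponding precisely to $\int_{R}I \not\subseteq p$.
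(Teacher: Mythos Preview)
Your approach is essentially the paper's: (a)--(c) are handled identically, and for (d) both you and the paper obtain $\int_{T}IT \subseteq (\int_{R}I)T$ from the base-change identity $L^{e}(IT) = L^{e}(I)\,T(X)$ together with the minimality in (c), and obtain the reverse containment by extending chains of links from $R$ to $T$.

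There is one organizational slip in your reverse containment. The hypothesis ``flat local Gorenstein extension'' means only that $T$ is local Gorenstein and $R\to T$ is flat; it does \emph{not} require $R\to T$ to be a local homomorphism (indeed the localization $R\to R_{p}$ with $p\neq m$ is the intended special case, which is why the hypothesis $IT\neq T$ is imposed rather than being automatic). Thus your sentence ``$JT\neq T$ because the map is local'' is not justified in general, and your proof of $(\int_{R}I)T\subseteq \int_{T}IT$ only covers the local-homomorphism case plus the localization case, leaving a gap for general $T$. The fix is exactly the dichotomy you already wrote out for localization: given a chain $I=I_{0}\sim\cdots\sim I_{n}=J$, either every $I_{i}T$ is proper, in which case the chain extends and $JT\subseteq\int_{T}IT$; or some $I_{i}T=T$, and taking the least such $i$ one sees $I_{i-1}T=((\underline{\alpha}):I_{i})T=(\underline{\alpha})T$ is a complete intersection, so $IT$ is licci and both sides are the unit ideal. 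This is precisely the paper's argument; you simply need to apply it in full generality rather than only to $T=R_{p}$.

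Your treatment of (c) is in fact more careful than the paper's, correctly isolating the edge cases $\int I = m$ (where Theorem~\ref{PUcor} does not directly apply) and $I$ licci (via Proposition~\ref{PUprop}(a)). And the paper, like you, takes the base-change formula $L^{e}(IT)=L^{e}(I)\,T(X)$ as essentially evident, so your stated concern about iterating the flat local Gorenstein property through successive universal links is a legitimate technical point, but not one the paper addresses either.
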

\pf Parts (a) and (b) are clear, while (c) follows from Theorem~\ref{PUcor} and Proposition~\ref{PUprop}(a).   We show (d).  
Let 
$$I = I_{0} \sim I_{1}\sim \cdots \sim I_{n} = J$$ be
any sequence of links.   Suppose first that $I_{i}T \neq T$ for all $i$. 
Then $$IT = I_{0}T \sim I_{1}T\sim \cdots \sim I_{n}T = JT$$ 
is a sequence of links  in $T$.   Therefore 
$(\int_{R} I)T\subseteq \int_{T}IT$ in this case. 
 Now suppose that 
$I_{i}T = T$ for some $i$, and that $i\ge 1$ is the least integer with
this property.   If $I_{i-1}\sim I_{i}$ is linked via the complete intersection $(\underline{\alpha})$, then 
then $I_{i-1}T = ((\underline{\alpha}):I_{i})T = (\underline{\alpha})T$ 
is a complete intersection.  Since by the first part of the argument, 
$$IT = I_{0}T \sim I_{1}T\sim \cdots \sim I_{i-1}T$$ 
is a sequence of links, in this case $IT$ is licci, and the containment holds 
again.    To show the reverse containment, we may assume that $I$
is not licci.   Let $L^{e}(I)\subset R(X)$ be an $e$-th universal link and $T' = T(X)$.
Then by (c), 
$L^{e}(IT) = (L^{e}(I))T' \subseteq (\int I)T' = ((\int I)T)T'$.  
Hence by the minimality property (c) again, we conclude that 
$\int IT \subseteq (\int I)T.$
\epf

\begin{lem} \label{maxrem}  
$\int I = I$ if and only if $I$ is maximal in its linkage class and is not the maximal ideal
of a regular local ring.  
\end{lem}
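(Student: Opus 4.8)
The plan is to exploit the definition of $\int$ together with Theorem~\ref{PUcor} to make the equivalence almost formal, isolating a single arithmetic fact about the maximal ideal as the real content. Write $g=\operatorname{ht}I>0$ throughout. For the forward implication, assume $\int I=I$. Since $I$ has positive codimension it is proper, so $\int I=I\neq(1)$; by the very definition of $\int$ this forces $I$ to be not licci, whence $\int I$ is the sum of all ideals in the linkage class of $I$. The equality $\int I=I$ then says precisely that every ideal of the class is contained in $I$, i.e.\ that $I$ is maximal in its linkage class. Moreover $I$ cannot be the maximal ideal of a regular local ring, for in that case $I=m$ would be generated by a regular system of parameters, hence a complete intersection and therefore licci, giving $\int I=(1)\neq I$.

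For the converse, assume $I$ is maximal in its linkage class and is not the maximal ideal of a regular local ring. The first step is to rule out $I$ being licci. If it were, then since $I$ contains every ideal of its class it would contain every complete intersection of codimension $g$ (these all lie in one class), hence every nonzerodivisor of $R$, so that $I=m$; this is exactly the mechanism used in the proof of Theorem~\ref{PUcor}. Combined with the fact recalled below that $m$ is licci only when $R$ is regular, this would make $I$ the maximal ideal of a regular local ring, contrary to hypothesis. Thus $I$ is not licci, so $\int I$ is the sum of the ideals in the class of $I$; maximality gives $\int I\subseteq I$, while $I\subseteq\int I$ because $I$ lies in its own class. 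Hence $\int I=I$.

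It remains to supply the fact used twice above: for $R$ Gorenstein with $\dim R\ge 1$, the maximal ideal $m$ is licci if and only if $R$ is regular. One direction is immediate, since in a regular ring $m$ is a complete intersection. For the other, I would run the generic link chain $m=I_0\sim I_1\sim\cdots$ of Definition~\ref{defgeneric} and track the deviation $d_i=\mu(I_i)-g$. Using that a link identifies $I_{i+1}/(\underline\alpha)$ with the canonical module of $R/I_i$, genericity yields the equalities $\operatorname{type}(R/I_{i+1})=\mu(I_i)-g$ and $\mu(I_{i+1})=g+\operatorname{type}(R/I_i)$, so that $d_{i+1}=d_{i-1}$; starting from $\operatorname{type}(R/m)=1$ (as $R$ is Gorenstein) and $d_0=\mu(m)-g=\operatorname{embdim}R-\dim R$, the deviations are periodic, equal to $\operatorname{embdim}R-\dim R$ and $1$ alternately. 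When $R$ is not regular both values are positive, so no generic link is a complete intersection, and by Proposition~\ref{PUprop}(a) the ideal $m$ is not licci. I expect this last bookkeeping to be the main obstacle: the inequality $\mu(I_{i+1})\le g+\operatorname{type}(R/I_i)$ is automatic from the canonical-module identification, whereas the reverse inequality --- that all $g$ of the generic elements $\underline\alpha$ remain minimal generators of the link --- is the only place the argument is genuinely not formal and where genericity must be invoked with care (cf.\ \cite{SOL}). If one prefers, the edge case can instead be dispatched by citing the known characterization of when the maximal ideal is licci, after which the lemma rests solely on the definition of $\int$ and Theorem~\ref{PUcor}.
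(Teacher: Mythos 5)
Your proposal is correct, and its overall skeleton coincides with the paper's: both reduce the lemma to the single substantive claim that a licci ideal which is maximal in its linkage class must be the maximal ideal of a regular local ring, and both extract ``$I=m$'' from the mechanism in the proof of Theorem~\ref{PUcor} (a licci ideal containing its whole class contains every complete intersection of its codimension, hence every nonzerodivisor). The divergence is in how the two arguments then show that $m$ licci forces $R$ regular. The paper dispatches this in one line by quoting Peskine--Szpiro \cite[2.6]{PS}: linkage preserves finite projective dimension, so a licci $m$ gives $\operatorname{pd}_R k<\infty$ and $R$ is regular by Auslander--Buchsbaum--Serre. You instead reprove the fact internally by tracking the deviations $d_i=\mu(I_i)-g$ along a generic link chain, using $\mu(L^1(I))=g+\operatorname{type}(R/I)$ and $\operatorname{type}(R/L^1(I))=\mu(I)-g$ to get the periodicity $d_{i+1}=d_{i-1}$, with $d_0=\operatorname{embdim}R-\dim R$ and $d_1=1$ both positive when $R$ is not regular. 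That argument does go through --- the needed formulas for universal links are in \cite{SOL}, and the hypothesis ``$I_i$ is not a complete intersection'' required at each stage is maintained inductively precisely because $d_i>0$ --- and you correctly flag that the only delicate point is that the generic $\underline\alpha$ stay minimal generators of the link. What the paper's route buys is brevity and robustness (no bookkeeping, one citation); what yours buys is a self-contained numerical proof that exposes \emph{why} the maximal ideal of a singular Gorenstein ring can never link down to a complete intersection, at the cost of leaning on the finer structure theory of universal links. Either is acceptable; the homological citation is the cleaner choice in context.
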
 
\pf  It suffices to show 
that the only licci ideal that is maximal in its linkage class is the maximal ideal of a regular 
local ring. Indeed, from the proof of Theorem \ref{PUcor} it follows that any such ideal has to be the maximal ideal of the ring. Being licci, it also has finite projective dimension (\cite[2.6]{PS}) so the ring is also regular.
\epf
  
The {\em nonlicci locus} is defined by 
$$\text{Nlicci}(I) = \{p\in V(I) \ \vert\  I_{p} \text{ is not licci in } R_{p}\}.$$ 
It is a Zariski closed subset of Spec $R$ (\cite[2.11]{AL}).  
The fact that $\int$ commutes with localization implies the following 
sharper version of this fact, which partially explains the separation of the non-licci case.

\begin{prop} \label{Nliccidefideal} 
$\textnormal{Nlicci}(I) = V(\int I).$
\end{prop}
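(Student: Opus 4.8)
The plan is to establish the equality of these two Zariski-closed sets one prime at a time, translating both conditions into statements about the localization $I_p$ and then exploiting that $\int$ commutes with localization. First I observe that both sides lie inside $V(I)$: this is the definition for $\text{Nlicci}(I)$, and for $V(\int I)$ it follows from $I\subseteq \int I$ (Theorem~\ref{basic}(a)). So it is enough to fix a prime $p\in V(I)$ and prove that $\int I\subseteq p$ if and only if $I_p$ is not licci in $R_p$. For such $p$, Theorem~\ref{basic}(d) supplies the key identity $(\int_R I)_p=\int_{R_p} I_p$; once this is combined with the elementary fact that $\int I\subseteq p$ holds if and only if $(\int I)_p$ is a proper ideal of $R_p$, the whole statement reduces to the single assertion that, for an ideal $J$ in a local Gorenstein ring, $\int J$ is proper precisely when $J$ is not licci.

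To prove that assertion, one direction is immediate from the definition of $\int$: if $J$ is licci then $\int J$ is the unit ideal. For the converse I would argue that, when $J$ is not licci, $\int J$ is by definition the sum of all ideals in the linkage class of $J$; every ideal occurring in a linkage class is proper, hence contained in the maximal ideal of the local ring, and therefore so is their sum. Thus $\int J$ is a proper ideal. Applying this with $J=I_p$, whose ambient maximal ideal is $pR_p$, yields exactly the equivalence required in the first paragraph.

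Putting the pieces together gives both inclusions at once: if $p\in \text{Nlicci}(I)$ then $I_p$ is not licci, so $(\int I)_p=\int_{R_p}I_p$ is proper and $\int I\subseteq p$; conversely, if $\int I\subseteq p$ then $(\int I)_p=\int_{R_p}I_p$ is proper, so by the definition of $\int$ the ideal $I_p$ cannot be licci. The genuine content of the argument is concentrated in the invocation of Theorem~\ref{basic}(d), which carries the commutation of $\int$ with localization; I expect this to be the only delicate point, since the remaining steps are the standard dictionary between $J\subseteq p$ and the properness of $J_p$, together with the observation---needed only to exclude the degenerate case of a non-licci ideal with $\int J=(1)$---that a sum of proper ideals of a local ring remains inside the maximal ideal.
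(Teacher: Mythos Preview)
Your proof is correct and follows essentially the same approach as the paper's own argument: both reduce the question, via Theorem~\ref{basic}(d), to the observation that $\int_{R_p} I_p$ is proper if and only if $I_p$ is not licci. Your version is simply more explicit, in particular handling separately the primes $p\notin V(I)$ and spelling out why a sum of proper ideals in a local ring stays proper.
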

\pf We have $p\in $ Nlicci$(I)$ if and only if $ \int_{R_{p}} I_{p} \neq R_{p}$,
or equivalently $ (\int_{R} I)_{p} \neq R_{p}$  by Theorem~\ref{basic}(d),
so this is equivalent  to $p\in V(\int_{R} I).$
\epf
  
We now compute $\int I$ in a couple of situations. 
\begin{thm}\label{PU}[\cite[2.10]{PU},\cite[1.1]{W}] Let $I$ be an unmixed ideal of $R$ of codimension $g \ge 2$. 
Suppose that either \be 
\item [(a)] $I$ is generically a complete intersection, $g\ge 3$ and $t\ge 2$, or 
\item [(b)] $I$ is reduced, and not a complete intersection  at any minimal prime. 
\ee Then the $t$-th symbolic power $I^{(t)}$ of $I$ is maximal in its linkage class.   In particular, if $t\ge 2$ then  $\int I^{(t)} = I^{(t)}$. 
\end{thm}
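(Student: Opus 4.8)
The plan is to reduce the maximality assertion to a statement at the minimal primes of $I$ and there to invoke the local maximality of powers. Note first that $I^{(t)}=m$ can occur only when $I=m$ and $t=1$, a degenerate instance of case (b) that is exactly \cite[1.1]{W}; so assume $I^{(t)}\neq m$. By the Polini--Ulrich characterization recalled after Theorem~\ref{PUcor}, $I^{(t)}$ is then maximal in its linkage class precisely when $L^{1}(I^{(t)})\subseteq I^{(t)}R(X)$, and by Lemma~\ref{ull} this is equivalent to the generic-link containment $L_{1}(I^{(t)})\subseteq I^{(t)}R[X]$. Since $I^{(t)}$ is unmixed with associated primes exactly the minimal primes of $I$, this containment of $R[X]$-ideals may be tested after localizing at the associated primes of $I^{(t)}R[X]$, each of which is extended from a minimal prime $p$ of $I$, as in the proof of Lemma~\ref{ull}.

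Fixing such a $p$, I would localize. Colons commute with localization and $(I^{(t)})_{p}=(I_{p})^{t}$, the primary components of $I^{(t)}$ at the other minimal primes becoming the unit ideal at $p$. Writing $\alpha_{i}=\sum_{j}X_{ij}f_{j}$ for a generating set $f_{1},\dots,f_{n}$ of $I^{(t)}$, the images of the $\alpha_{i}$ generate a generic complete intersection inside $(I_{p})^{t}$; as the generic link is independent of the generators \cite[2.11]{SOL}, the localized colon is exactly the first universal link of $(I_{p})^{t}$ over $R_{p}$. Thus the whole statement reduces to showing that $(I_{p})^{t}$ is maximal in its linkage class in the Gorenstein local ring $R_{p}$, for every minimal prime $p$.

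Here the two hypotheses split into two local computations, each furnished by the cited references applied to $I_{p}$ in $R_{p}$. In case (a), $I_{p}$ is an $\mathfrak{m}_{R_p}$-primary complete intersection of codimension $g\ge 3$, and with $t\ge 2$ the maximality of $(I_{p})^{t}$ is \cite[2.10]{PU}; the restrictions $g\ge 3$ and $t\ge 2$ are precisely what exclude the licci cases, as every ideal is licci in codimension at most two and a first power of a complete intersection is again a complete intersection. In case (b), reducedness forces $I_{p}=\mathfrak{m}_{R_p}$, and the hypothesis that $I$ is not a complete intersection at $p$ says exactly that $R_{p}$ is not regular; the maximality of $\mathfrak{m}_{R_p}^{t}$ in this non-regular Gorenstein ring is \cite[1.1]{W}. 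I expect these local maximality statements---asserting that the generic link of a power of a complete intersection, respectively of the maximal ideal, falls back inside that power---to be the genuine obstacle; all that precedes them is formal manipulation with the tools of Section~2.

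Finally, the ``in particular'' clause follows from Lemma~\ref{maxrem}: given that $I^{(t)}$ is maximal in its linkage class, one needs only that for $t\ge 2$ it is not the maximal ideal of a regular local ring, and indeed $I^{(t)}=m$ would force $I=m$ and hence $m^{t}=m$, impossible for $t\ge 2$. Lemma~\ref{maxrem} then yields $\int I^{(t)}=I^{(t)}$.
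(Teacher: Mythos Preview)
The paper gives no proof of this theorem: the maximality statement is quoted verbatim from \cite[2.10]{PU} (case~(a)) and \cite[1.1]{W} (case~(b)), and the ``in particular'' clause is an immediate consequence of Lemma~\ref{maxrem}. So there is nothing in the paper to compare your argument against beyond the citations themselves.

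Your localization argument is correct and is, in outline, the same reduction that underlies the cited proofs: pass to the minimal primes of $I$, where the hypotheses become ``$I_p$ is an $m_{R_p}$-primary complete intersection of codimension $\ge 3$'' in case~(a) and ``$I_p=m_{R_p}$ with $R_p$ not regular'' in case~(b), and then invoke the $m$-primary cores of \cite{PU} and \cite{W}. One small imprecision worth noting: after localizing $L_1(I^{(t)})$ at $pR[X]$ you land in $R_p(X)$ with $X$ still an $n\times g$ matrix for the global generating set of $I^{(t)}$; this is the universal link of $(I_p)^t$ only up to a faithfully flat extension by the redundant variables, not ``exactly'' $L^1((I_p)^t)$. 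The containment you need is unaffected, but the identification requires \cite[2.11]{SOL} more carefully than you indicate. Your derivation of the ``in particular'' clause via Lemma~\ref{maxrem}, including the check that $I^{(t)}=m$ forces $t=1$, is exactly what the paper intends.
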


\begin{thm}[{\cite[3.8]{UU}}] \label{shift} Let  $R' = k[x_{1},...,x_{n}]$ and let $I'$ be 
a homogeneous CM $R'$-ideal of codimension $g$ whose graded minimal free resolution 
has the form $$0\lra \oplus R'(-n_{gi})\lra \cdots \lra \oplus R'(-n_{1i}) \lra I' \lra 0$$
with max $\{n_{gi}\} \le (g-1)min \{n_{1i}\}$.  Suppose that $I$ has initial degree $d$.  Then with 
$R = k[x_{1},...,x_{n}]_{(x_{1},...,x_{n})}$, $m = (x_{1},...,x_{n})R$ and $I = I'R$ we have 
$$\int I \subseteq m^{d}.$$
\end{thm}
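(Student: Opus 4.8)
The plan is to reduce the statement to a bound on the universal links and then analyze how the initial degree behaves under a single link. Since $m^{d}\subsetneq m$ whenever $d\ge 2$ (and the assertion is trivial when $d\le 1$, as $\int I\subseteq m$ already holds for any non-licci $I$), by the minimality in Theorem~\ref{basic}(c) it suffices to prove that
$$L^{e}(I)\subseteq m^{d}R(X)\qquad\text{for every }e\ge 1.$$
In particular each $L^{e}(I)$ is then a proper ideal, so by Proposition~\ref{PUprop}(a) $I$ is not licci and $\int I$ is genuinely the sum of its linkage class. Passing to $R(X)$ is harmless for the degree bookkeeping: base change $k\lra k(X)$ is flat and preserves graded Betti numbers, and $\textnormal{gr}_{m}R(X)$ is a polynomial ring over $k(X)$, so ``order $\ge d$'' is exactly membership in $m^{d}R(X)$. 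Thus I would work throughout with the graded data $\{n_{1i}\}$, $\{n_{gi}\}$ of the minimal free resolution, writing $\textnormal{indeg}(A)=\min_i n_{1i}$ and $\textnormal{topdeg}(A)=\max_i n_{gi}$ for a homogeneous CM ideal $A$.

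The key local computation is the behavior of these two invariants under one link. If $A$ is linked to $B$ by a homogeneous complete intersection $\underline{\alpha}$ of degrees $d_{1},\dots,d_{g}$ and $s=\sum_i d_i$, then the standard duality $\omega_{R/A}\cong (B/\underline{\alpha})(s-n)$ (with $n$ the number of variables) shows that the minimal generators of $B$ occur in degrees drawn from $\{d_i\}\cup\{s-n_{gi}\}$, while those of $\omega_{R/B}\cong(A/\underline{\alpha})(s-n)$ give the dual bound. Hence
$$\textnormal{indeg}(B)\ \ge\ \min\{\,\textstyle\min_i d_i,\ s-\textnormal{topdeg}(A)\,\},\qquad \textnormal{topdeg}(B)\ \le\ s-\textnormal{indeg}(A).$$
Because $\underline{\alpha}\subseteq A$, every $d_i\ge \textnormal{indeg}(A)$ and so $s\ge g\,\textnormal{indeg}(A)$. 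Applied to $A=I$, this already gives the base case: $\min_i d_i\ge d$ and, using the hypothesis $\textnormal{topdeg}(I)=\max_i n_{gi}\le (g-1)d$,
$$s-\textnormal{topdeg}(I)\ \ge\ gd-(g-1)d\ =\ d,$$
so $\textnormal{indeg}(L^{1}(I))\ge d$. This is precisely the point of the numerical hypothesis: it is exactly the inequality that forces the first generic link into $m^{d}$.

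I would then try to propagate this by induction on $e$, the governing inequality being $s_e-\textnormal{topdeg}(L^{e-1}(I))\ge d$, equivalently (since $s_e-n=a(R/\underline{\alpha}^{(e)})$ and $\textnormal{topdeg}(L^{e-1}(I))=a(R/L^{e-1}(I))+n$) that the $a$-invariant jumps by at least $d$ in passing from $R/L^{e-1}(I)$ to the linking complete intersection $R/\underline{\alpha}^{(e)}$. The easy half is monotonicity: since $R/\underline{\alpha}^{(e)}\twoheadrightarrow R/L^{e-1}(I)$ is a surjection of Cohen--Macaulay rings of the same dimension, right exactness of top local cohomology gives $a(R/L^{e-1}(I))\le a(R/\underline{\alpha}^{(e)})=s_e-n$, i.e.\ $\textnormal{topdeg}(L^{e-1}(I))\le s_e$ unconditionally.

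The hard part is upgrading this bound by the extra $d$. The obstacle is that $\textnormal{topdeg}(L^{e}(I))$ can exceed $(g-1)d$, so the numerical hypothesis is not literally preserved along the chain; this happens exactly when the minimal-degree part of the ideal has fewer than $g$ generators, forcing any length-$g$ complete intersection inside it to use higher-degree elements and making $s_e>gd$. The mechanism that rescues the bound is that the deficiency is inherited: the generator-degree formula above shows $L^{e}(I)$ then again has too few minimal-degree generators, which forces $s_{e+1}$ to grow by a matching amount, so that $s_{e+1}-\textnormal{topdeg}(L^{e}(I))\ge d$ persists and $\textnormal{indeg}$ never drops below $d$. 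Making this self-correction precise -- tracking the number and degrees of minimal generators alongside $\textnormal{topdeg}$ through the mapping-cone resolution of a universal link, and showing the resulting pair of invariants obeys a recursion stabilizing at $(\textnormal{indeg},\textnormal{topdeg})=(d,(g-1)d)$ -- is the technical heart of the argument, and is where I would lean on the structural properties of universal linkage developed in \cite{SOL}.
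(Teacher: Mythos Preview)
The paper does not prove this statement at all; it is simply quoted from Ulrich \cite[3.8]{UU}, so there is no ``paper's own proof'' to compare against.  That said, your proposal has a genuine gap that you yourself flag.

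Your reduction via Theorem~\ref{basic}(c) to the containments $L^{e}(I)\subseteq m^{d}R(X)$ is correct, and the base case analysis --- that for any homogeneous link through $\underline{\alpha}$ of degrees $d_{1},\dots,d_{g}$ with $s=\sum d_{i}$ one has $\text{indeg}(B)\ge\min\{\min_{i}d_{i},\, s-\text{topdeg}(A)\}$, and hence $\text{indeg}\ge d$ for the first link because $s\ge gd$ and $\text{topdeg}(I)\le (g-1)d$ --- is the right computation.  But the inductive step is not actually carried out: you observe that $\text{topdeg}(L^{e}(I))$ can exceed $(g-1)d$, describe a heuristic ``self-correction'' mechanism in which a deficiency of low-degree generators is inherited and forces $s_{e+1}$ to compensate, and then say this ``is the technical heart of the argument'' where you ``would lean on'' \cite{SOL}.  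That is a description of what one hopes happens, not a proof; in particular you have not exhibited a quantity (or pair of quantities) that is provably monotone along the chain of universal links and that bounds $\text{indeg}$ from below by $d$.  Ulrich's argument in \cite{UU} proceeds instead by proving a stability statement: the numerical hypothesis on the resolution is itself preserved by generic linkage (with the same $d$), which makes the induction immediate and bypasses the need for your ad~hoc correction term.

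There is also a smaller technical point you pass over too quickly.  The universal link is formed from the $\alpha_{i}=\sum_{j}X_{ij}f_{j}$, which are not homogeneous when the $f_{j}$ have differing degrees, so your ``graded bookkeeping'' does not apply verbatim.  One must either argue via the $m$-adic order filtration in $R(X)$ (showing directly that every element of $L^{e}(I)$ has order $\ge d$), or --- as in \cite{UU} --- work with a suitable bigrading or with initial forms so that the resolution-theoretic inequalities survive the passage to $R(X)$.  Your one-sentence dismissal (``passing to $R(X)$ is harmless'') does not address this.
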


\begin{example} \label{ex1} Let $R = k[x,y,z,w]_{(x,y,z,w)},   m = (x,y,z,w)$, and let $I = (x^{2},xy,y^{2},z^{2},zw,w^{2})$. 
Then $\int I = m^{2}$.  
\end{example}
\pf Let $R' = k[x,y,z,w]$ and consider the homogeneous ideal $I'$ generated by the corresponding
forms in $R'$ that generate $I$.  Then the graded minimal free resolution has the form 
$$0\lra R'^{4}(-6)\lra \cdots \lra R'^{6}(-2)\lra I'\lra 0.$$
Since $I'$ has codimension $g = 4$, the condition of Theorem~\ref{shift} is satisfied, hence 
$\int I \subseteq m^{2}$.  On the other hand, linking via the regular sequence $x^{2},y^{2},z^{2},w^{2}$ clearly yields a link containing the product 
$(x,y)(z,w)$.  Since the mixed terms $xy, zw$ already belong to $I$, 
it follows that $m^{2}\subseteq \int I$, so equality holds.
\epf
Note that in the above example $\int I =\int m^2=m^2$ but $I$ and $m^2$ are not in the same linkage class, by \cite[5.13]{SOL}.

The following result will be needed later in the paper.

\begin{lem}  \label{linksdeform} Let $(S,J)$ be a deformation of $(R,I)$.  Then 
$$\int I \subseteq (\int J)R.$$
\end{lem}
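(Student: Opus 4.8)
The plan is to deduce everything from the universal-link characterization of $\int$ in Theorem~\ref{basic}(c), together with the compatibility of universal linkage with the given deformation. Write $R = S/(\underline x)$, where $\underline x$ is the $S$-regular sequence defining the deformation; thus $\underline x$ is regular on $S/J$ and $JR = I$. Choosing generators $f_{1},\dots,f_{n}$ of $J$, their images generate $I$; since $\underline x$ is regular on $S/J$ the codimension is preserved, so $\operatorname{grade}_{S}J = \operatorname{grade}_{R}I = g$ and $J$ is again Cohen--Macaulay. I would therefore build the generic links of $J$ over $S[X]$ and of $I$ over $R[X]$ from these compatible generating sets using the \emph{same} generic $n\times g$ matrix $X$, so that the defining sequences $\alpha_{i}=\sum_{j}X_{ij}f_{j}\in S[X]$ reduce modulo $\underline x$ to the sequences defining $L_{1}(I)$.

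The technical heart --- and the step I expect to be the main obstacle --- is to show that universal linkage commutes with the deformation: for every $e\ge 1$ the reduction map $\phi\colon S(X)\twoheadrightarrow S(X)/\underline x\,S(X)=R(X)$ satisfies $\phi(L^{e}(J))=L^{e}(I)$, equivalently that $(S(X),L^{e}(J))$ is a deformation of $(R(X),L^{e}(I))$. By induction it suffices to treat first links. Here one inclusion is automatic, since $r\in(\underline\alpha):JS[X]$ forces $\overline r\in(\overline{\underline\alpha}):IR[X]$, giving $\phi(L_{1}(J))\subseteq L_{1}(I)$; the content is the reverse inclusion $L_{1}(I)\subseteq\phi(L_{1}(J))$. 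This is precisely the assertion that forming the colon defining the link commutes with reduction modulo $\underline x$, and it follows from the deformation theory of generic links in \cite{SOL} (cf.\ \cite[2.17]{SOL}): $S[X]/(\underline\alpha)$ is Gorenstein and $S[X]/JS[X]$ is Cohen--Macaulay of codimension $g$, and, $X$ being generic, $\underline x$ stays regular on $S[X]/(\underline\alpha)$ and on $S[X]/L_{1}(J)$, so the canonical-module identification of the link is preserved under the reduction. Localizing at $m$ passes this from generic to universal links, and the induction then applies at each stage, the deformation variable $\underline x$ persisting through the successive polynomial extensions and localizations.

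Granting this, the conclusion is immediate. Applying Theorem~\ref{basic}(c) to $J$ over $S$ gives $L^{e}(J)\subseteq(\int_{S}J)\,S(X)$ for every $e\ge 1$; pushing this forward along $\phi$ and using $\phi(L^{e}(J))=L^{e}(I)$ yields
$$
L^{e}(I)\ \subseteq\ \bigl((\int_{S}J)R\bigr)R(X)\qquad\text{for every }e\ge 1 .
$$
Since Theorem~\ref{basic}(c) characterizes $\int_{R}I$ as the \emph{smallest} $R$-ideal $C$ with $L^{e}(I)\subseteq CR(X)$ for all $e$, this forces $\int_{R}I\subseteq(\int_{S}J)R$, as claimed. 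The argument covers the licci case uniformly: if $I$ is licci then $L^{e}(I)=(1)$ for some $e$ by Proposition~\ref{PUprop}(a), whence $\phi(L^{e}(J))=(1)$ and, since $\underline x\subseteq m_{S(X)}$, Nakayama's lemma gives $L^{e}(J)=(1)$; thus $J$ is licci and both sides equal the unit ideal.
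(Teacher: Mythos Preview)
Your argument is correct, but it is considerably more elaborate than the paper's. The paper works directly from the \emph{definition} of $\int$: given any chain of links $I=I_0\sim\cdots\sim I_n$ in $R$, it invokes \cite[2.16]{SOL} to \emph{lift} this to a chain $J=J_0\sim\cdots\sim J_n$ in $S$ with each $(S,J_i)$ a deformation of $(R,I_i)$; then $I_n=J_nR\subseteq(\int J)R$, and summing over all $I_n$ gives the result in two lines. (The licci case is handled implicitly: if $I_n$ is a complete intersection then so is its deformation $J_n$, since Betti numbers are preserved under specialization by a regular sequence.) By contrast, you pass through the universal-link characterization of Theorem~\ref{basic}(c), establish that universal links \emph{specialize} along the deformation, and invoke minimality. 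Your route is sound---the specialization of links you need is indeed standard deformation theory of linkage, though \cite[2.17]{SOL} as you cite it is really about generic links deforming to \emph{ordinary} links, not about compatibility with an ambient deformation $(S,J)\to(R,I)$; the fact you actually use is the easier ``links specialize'' direction. What your approach buys is a demonstration that the whole lemma can be read off from the universal-link formalism; what the paper's approach buys is brevity and a proof that needs nothing beyond the definition and one citation.
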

\pf Let
$$I = I_{0} \sim I_{1}\sim \cdots \sim I_{n}$$ be a sequence of links in $R$.  Then by \cite[2.16]{SOL} there is a sequence of links 
$$J = J_{0} \sim I_{1}\sim \cdots \sim J_{n} $$ in $S$ such that $(S,J_{i})$ is a deformation of $(R,I_{i})$ for every $i$.   The result therefore follows.
\epf

\section{Liaison of hypersurface sections}

 The main result in this section is the following theorem, showing that the linkage invariant $\int$ is compatible with 
taking hypersurface sections.  
 
\begin{thm}[Hypersurface Section Formula]  \label{hyp} 
Let $x\in R$  be  regular on $R$  and on $R/I$.   Then 
$$\int (I,x) =  ( (\int I), x).$$
\end{thm}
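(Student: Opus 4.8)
The plan is to prove the two inclusions after a reduction modulo $x$ that feeds into the deformation machinery of Section 2. Since $x$ is regular on $R$, the quotient $\overline{R}=R/xR$ is again local Gorenstein, and since $x$ is regular on $R/I$ the ideal $\overline{I}=I\overline{R}$ is a Cohen--Macaulay $\overline{R}$-ideal of the same codimension as $I$, with $R/(I,x)=\overline{R}/\overline{I}$. Writing $\pi\colon R\to\overline{R}$ for the projection and noting that both $\int(I,x)$ and $((\int I),x)$ contain $x$, the asserted equality is equivalent to the equality of their images in $\overline{R}$, namely $(\int(I,x))\overline{R}=(\int I)\overline{R}$. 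A first, free input is that $(R,I)$ is a deformation of $(\overline{R},\overline{I})$ through the single regular element $x$ (which is regular on $R/I$), so Lemma~\ref{linksdeform} gives $\int_{\overline{R}}\overline{I}\subseteq(\int_R I)\overline{R}$.

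Next I would set up the correspondence between links in $\overline{R}$ and links of the form $(\,\cdot\,,x)$ in $R$. If $\overline{I}$ is linked in $\overline{R}$ to $\overline{K}$ through a complete intersection $\overline{\gamma}_1,\dots,\overline{\gamma}_g$, then lifting the $\overline{\gamma}_i$ and appending $x$ produces a grade $g+1$ complete intersection $\gamma_1,\dots,\gamma_g,x$ in $R$, and a direct colon computation modulo $x$ shows it links $(I,x)$ to $\pi^{-1}(\overline{K})$. Hence every preimage $\pi^{-1}(\overline{K})$ lies in the linkage class of $(I,x)$, and summing over the class of $\overline{I}$ yields $\int_{\overline{R}}\overline{I}\subseteq(\int_R(I,x))\overline{R}$. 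Combined with the first paragraph, this exhibits $\int_{\overline{R}}\overline{I}$ as a common lower bound for the two images I must compare.

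It then remains to prove $(\int_R I)\overline{R}\subseteq(\int_R(I,x))\overline{R}$ and its reverse. For the former I would lift an arbitrary link $I\sim J$ in $R$ to a link $(I,x)\sim(J,x)$, so that $(J,x)\subseteq\int(I,x)$ for every $J$ in the class of $I$; the candidate linking complete intersection is $(\underline{\alpha},x)$, which requires $x$ to remain regular modulo $\underline{\alpha}$ and on the new link. For the reverse I would invoke the minimality characterization Theorem~\ref{basic}(c): it suffices to check $L^{e}((I,x))\subseteq((\int I),x)R(X)$ for all $e$, equivalently (Proposition~\ref{PUprop}) that every ideal $K$ linked to $(I,x)$ satisfies $\pi(K)\subseteq(\int_R I)\overline{R}$. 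I would get this by pushing the defining data of such a link down to $\overline{R}$, placing $\pi(K)$ in the class of $\overline{I}$ and hence inside $\int_{\overline{R}}\overline{I}\subseteq(\int_R I)\overline{R}$, and then using Theorem~\ref{basic}(c) to return to the ideal $\int(I,x)$. The licci cases are the degenerate instances of these two containments: the implication that $(I,x)$ is licci whenever $I$ is (a link chain to a complete intersection lifts, since $(\underline{c},x)$ is again a complete intersection) settles the easy inclusion, while the reverse implication is recovered a posteriori, for once $\int(I,x)\subseteq((\int I),x)$ is known and $\int I\neq R$ the right-hand side is proper because $x\in m$, so $(I,x)$ is not licci.

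The main obstacle is exactly this reduction modulo $x$ of an \emph{arbitrary} link of $(I,x)$. A complete intersection cutting out such a link need not contain $x$, nor need $x$ stay regular on the successive links along a chain, so $\pi$ does not carry links to links and grades can drop unpredictably. To overcome this I would pass to universal linkage in $R(X)$, where genericity should keep $x$ regular on $R(X)/L^{e}(\,\cdot\,)$ and let the link data specialize and descend compatibly with $\pi$; Theorem~\ref{basic}(c) and Proposition~\ref{PUprop} then convert the resulting containment of universal links into the desired containment of invariants. This is the delicate step, and the one where the hypothesis that $x$ is a nonzerodivisor on both $R$ and $R/I$ is used in full.
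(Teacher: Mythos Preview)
Your reduction modulo $x$ is a natural idea, but the hard inclusion $\int(I,x)\subseteq((\int I),x)$ does not go through this way, and the gap is exactly where you locate it. The claim that $\pi(K)$ lies in the linkage class of $\overline I$ in $\overline R$ fails already for codimension reasons: if $K$ is in the class of $(I,x)$ then $K$ is Cohen--Macaulay of height $g+1$, and whenever $x$ happens to be regular on $R/K$ (which it typically is---e.g.\ link $(a,b)$ to $(a,b-c)$ in $k[[a,b,c]]$ via $(a,b(b-c))$) the image $\pi(K)$ has height $g+1$ in $\overline R$, not $g$, so it cannot belong to the class of $\overline I$. Passing to universal links does not rescue this: genericity tends to make $x$ regular on $R(X)/L^e((I,x))$, which is precisely the case in which $\pi$ sends the link to an ideal of the wrong codimension. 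So the chain $\pi(K)\subseteq\int_{\overline R}\overline I\subseteq(\int_R I)\overline R$ breaks at the first step, and no amount of genericity repairs it.

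The paper's proof avoids reduction modulo $x$ entirely. The missing ingredient is the structure theorem \cite[2.2]{HU2}: a first universal link of $(I,x)$ has the form $L^1((I,x))=(H,z)$, where $H$ may be taken to be $L^1(I)$ and $z\in(I,x)R(Y)$ is a \emph{new} element, regular on $R(Y)$ and on $R(Y)/H$. Iterating yields $L^e((I,x))=(H_e,z_e)$ with $H_e$ linked to $IR(Y)$ in $e$ steps and $z_e\in(H_{e-1},\dots,H_1,I,x)$; since each $H_i\subseteq(\int I)R(Y)$ by Theorem~\ref{basic}(d), one gets $L^e((I,x))\subseteq((\int I),x)R(Y)$ and concludes by Theorem~\ref{basic}(c). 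The moral is that the ``hypersurface section'' shape does propagate along universal links, but with a \emph{moving} regular element $z_e$ rather than the fixed $x$---which is why descent modulo the fixed $x$ cannot see it. For the reverse inclusion $\int I\subseteq\int(I,x)$ your instinct is right and matches the paper: from $H=L^1(I)$ one obtains $L^e(I)\subseteq L^e((I,x))$ for all $e$, and Theorem~\ref{basic}(c) finishes. (Your direct lift $(\underline\alpha)\mapsto(\underline\alpha,x)$ can indeed fail---take $I=(a,b)$, $x=c$, $\underline\alpha=(a,bc)$ in $k[[a,b,c]]$---so the passage to universal links is essential there too.)
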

  \pf 
We first show the containment  $``\subseteq$''.
 If $I$ is licci, the result is clear, 
so we may assume that $I$ is not licci. By 
Theorem~\ref{basic},   it suffices to show that   $$L^{e}((I,x))\subseteq ((\int I),x)R(Y)$$ for every $e\ge 1$.  
By \cite[2.2]{HU2}, 
$$L^{1}((I,x)) = (H, z)$$ 
where $H$ is an ideal directly linked to $IR(Y)$ and $z\in R(Y)$ is regular on $R(Y)$ and on $R(Y)/H.$  
Furthermore, $z\in (I,x)$.  We set $H = H_{1}$ and $z = z_{1}$.  
By induction on $e$, we conclude that, for every $e\ge 1$, there exists a universal link 
$L^{e}((I,x))$  in an extension of $R$ (which we denote again by $R(Y)$) with
$$L^{e}((I, x)) = (H_{e}, z_{e}),$$ with $H_{e}$ is linked to $H_{e-1}R(Y)$, and 
$z_{e}\in R(Y)$ is regular on $R(Y)$ and on $R(Y)/H_{e}.$ 
It follows that $H_{e}$ is linked to $IR(Y)$ in $e$ steps. 
Furthermore, we also have that $$z_{e}\in (H_{e-1},z_{e-1}) 
\subseteq (H_{e-1},H_{e-2},z_{e-2})\subseteq \cdots 
\subseteq (H_{e-1},...,H_{1},I,x).$$   
Now for every $i$, we have that $H_{i}\subseteq \int (IR(Y)).$  But by 
Theorem~\ref{basic}, $\int (IR(Y)) = (\int I) R(Y)$.
Therefore  $$L^{e}((I,x)) = (H_{e},z_{e}) \subseteq (H_{e},H_{e-1},...,H_{1},I,x)\subseteq
((\smallint I), x)R(Y)$$
and   the required containment follows. 

Now to show the equality, it suffices to show that $\int I \subseteq \int (I,x)$. 
If $I$ is licci then so is $(I,x)$ by \cite[2.3]{HU2}, so the result is clear, and again 
we may assume that $I$ is not licci.   

   We shall use a more precise description of the ideal $H$ described at the beginning of the proof, in the formula for the first universal link $L^{1}((I,x))$ of the hypersurface 
section.  A routine matrix argument (using the proof of \cite[2.2]{HU2}, cf. also the proof of \cite[2.3]{LP})
shows that one may take $H$ to be (the extension of) a first universal link $L^{1}(I)$ of $I$.   
   Hence, one has 
$L^{1}(I)\subseteq L^{1}((I,x))$ and by induction 
$L^{e}(I)\subseteq L^{e}((I,x))$ for every $e$. 
 Therefore, by Theorem~\ref{basic},
$L^{e}(I)\subseteq (\int(I,x))R(Y)$ for every $e$, and then again 
by the minimal property of $\int I$, we  conclude that    $\int I \subseteq \int (I,x)$. 
This completes the proof.  
\epf

\begin{cor} \label{cicase}
Let $\underline{x} \subseteq R$ be a sequence that is regular on $R$ and on $R/I$ and set $J = (\underline{x})$.   
Then $$\int(I+J) = (\int I) + J.$$ 
\end{cor}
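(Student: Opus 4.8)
The plan is to reduce to the one-variable Hypersurface Section Formula (Theorem~\ref{hyp}) by induction on the length $n$ of the regular sequence $\underline{x} = x_{1},\dots,x_{n}$. Writing $J = (x_{1},\dots,x_{n})$, the statement for $n=1$ is precisely Theorem~\ref{hyp}, since then $I + J = (I,x_{1})$ and $(\int I) + J = ((\int I),x_{1})$.

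For the inductive step, set $\underline{x}' = x_{1},\dots,x_{n-1}$ and $J' = (\underline{x}')$, so that $I + J = (I+J') + (x_{n})$. First I would record that $\underline{x}'$, being an initial segment of a sequence regular on $R$ and on $R/I$, is itself regular on $R$ and on $R/I$; in particular $R/(I+J')$ is Cohen-Macaulay, so $I+J'$ is a CM ideal to which $\int$ applies, and the inductive hypothesis gives
$$\int(I+J') = (\int I) + J'.$$

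Next I would verify that $x_{n}$ satisfies the hypotheses of Theorem~\ref{hyp} relative to the ideal $I+J'$. The condition that $x_{n}$ is regular on $R/(I+J') = R/(I,x_{1},\dots,x_{n-1})$ is exactly the assertion that $x_{n}$ is a nonzerodivisor on $R/(I,x_{1},\dots,x_{n-1})$, which is part of the hypothesis that $x_{1},\dots,x_{n}$ is regular on $R/I$; and $x_{n}$ is regular on $R$ because in the Noetherian local ring $R$ the regular sequence $x_{1},\dots,x_{n}$ may be permuted, so $x_{n}$ is a nonzerodivisor on $R$. Applying Theorem~\ref{hyp} with $I+J'$ in place of $I$ and combining with the inductive hypothesis yields
$$\int(I+J) = \int\big((I+J'),x_{n}\big) = \big(\int(I+J'),x_{n}\big) = \big((\int I)+J',x_{n}\big) = (\int I) + J,$$
which completes the induction.

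The computation is essentially formal once Theorem~\ref{hyp} is in hand; the only points requiring care are the bookkeeping around the regularity hypotheses at each stage, namely that truncating the sequence preserves regularity on $R$ and on $R/I$ (so that the inductive hypothesis is applicable and $I+J'$ is a CM ideal), and that $x_{n}$ remains a nonzerodivisor both on $R$ and on the enlarged quotient $R/(I+J')$. I expect this verification---rather than any genuinely new idea---to be the main (if modest) obstacle.
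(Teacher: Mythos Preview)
Your proof is correct and matches the paper's intended argument: the corollary is stated immediately after Theorem~\ref{hyp} without proof, the implicit justification being precisely the induction on the length of the regular sequence that you carry out. Your careful verification that $x_{n}$ is regular on $R$ and on $R/(I+J')$, and that $I+J'$ remains CM so that Theorem~\ref{hyp} applies at each step, fills in exactly the routine bookkeeping the paper omits.
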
\smallskip

\begin{cor} \label{maxprop}  
Let $\underline{x}\subseteq R$  be a sequence that is regular on $R$  and on $R/I$.  
If $I$ is maximal in its linkage class then so is $(I, \underline{x})$.  
\end{cor}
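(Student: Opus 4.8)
The plan is to reduce the entire statement to the invariant $\int$ together with the characterization in Lemma~\ref{maxrem}. Recall that, by that lemma, an ideal $J$ satisfies $\int J = J$ exactly when $J$ is maximal in its linkage class and is not the maximal ideal of a regular local ring. So the strategy is: compute $\int(I,\underline{x})$, show it equals $(I,\underline{x})$, and then read off the conclusion from Lemma~\ref{maxrem}. The only thing to watch is the single degenerate case that Lemma~\ref{maxrem} explicitly excludes.

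First I would invoke Lemma~\ref{maxrem} to split into two cases: since $I$ is maximal in its linkage class, either $\int I = I$, or $I$ is the maximal ideal of a regular local ring. In the main case $\int I = I$, I would apply Corollary~\ref{cicase} with $J = (\underline{x})$, which is exactly the iterated form of the Hypersurface Section Formula (Theorem~\ref{hyp}) already established. This gives directly
\[
\int(I,\underline{x}) \;=\; \int(I+J) \;=\; (\textstyle\int I) + J \;=\; I + J \;=\; (I,\underline{x}).
\]
Since $\underline{x}$ is a regular sequence contained in $m$ that is regular modulo $I$ and $R/I \neq 0$, the ideal $(I,\underline{x})$ is proper and is not the maximal ideal of a regular local ring (otherwise it would be licci, forcing $\int(I,\underline{x}) = (1)$, contradicting the computation). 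Hence Lemma~\ref{maxrem} applies to $(I,\underline{x})$ and yields that it is maximal in its linkage class, as desired.

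The point requiring separate handling — and the step I expect to be the only (minor) obstacle — is the excluded case where $I$ is the maximal ideal $m$ of a regular local ring. There $\int I = (1) \neq I$, so the clean computation above is unavailable. Here I would instead observe that $R/I = k$ is a field, hence has depth zero, so that the \emph{only} sequence regular on $R/I$ is the empty one; consequently $(I,\underline{x}) = I$, and the conclusion is immediate from the hypothesis on $I$. This observation also explains why the hypotheses (regularity of $\underline{x}$ on $R/I$, and the standing convention that regular sequences lie in $m$) preclude any unit-ideal degeneracy and keep $(I,\underline{x})$ a proper ideal throughout.
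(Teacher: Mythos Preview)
Your proof is correct and follows essentially the same route as the paper's own argument: apply Corollary~\ref{cicase} to get $\int(I,\underline{x}) = (\int I) + (\underline{x}) = (I,\underline{x})$ and then invoke Lemma~\ref{maxrem}. The paper simply writes ``since $I$ is not the maximal ideal'' without justification, whereas you explicitly dispose of the degenerate case $I = m$ by observing that the regular sequence on $R/I = k$ must then be empty; this extra care is the only difference.
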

\pf  
  By Corollary~\ref{cicase}, since $I$ is not the maximal ideal,  
$$\int (I,\underline{x}) =  (\int I) + (\underline{x})  = (I, \underline{x}),$$
  so the result follows.
\epf

We now give a hypersurface section characterization of licci ideals.  
The following result is an immediate consequence of Theorem~\ref{hyp} in the CM  case.  However we can prove this 
result in somewhat greater generality.  

\begin{thm}\label{licci}  Let $(R,m)$ be a local Gorenstein ring with an infinite residue field and let $I$
be an unmixed $R$-ideal.  Then $I$ is licci if and only if 
the ideals $(I,X)$ and $(I,Y)$ are in the same linkage class in  $R[X,Y]_{(m,X,Y)}.$
\end{thm}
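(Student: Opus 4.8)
The plan is to establish both implications by reducing to the hypersurface-section theory developed earlier. The forward direction is the routine one. If $I$ is licci, then by \cite[2.3]{HU2} each of the one-variable hypersurface sections $(I,X)$ and $(I,Y)$ is again licci, so both lie in the linkage class of a complete intersection of the same codimension, hence in a common linkage class. Concretely, I would pass to the ring $R[X,Y]_{(m,X,Y)}$, observe that $X-Y$ is regular on this ring and on $R[X,Y]_{(m,X,Y)}/(I,X)$, and use it (or a direct appeal to Theorem~\ref{licci}'s CM special case via Theorem~\ref{hyp}) to directly link $(I,X)$ and $(I,Y)$ along the complete intersection data; the cleanest route is simply to invoke that licci is preserved under the relevant operations so both ideals sit in the linkage class of a complete intersection.

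For the converse, the strategy is to prove the contrapositive using the invariant $\smallint$. Suppose $I$ is \emph{not} licci; I want to show $(I,X)$ and $(I,Y)$ cannot be in the same linkage class. Working in $S = R[X,Y]_{(m,X,Y)}$, the key is that linkage-class membership is detected by $\smallint$: by Theorem~\ref{basic}(b), if two ideals lie in the same linkage class they have equal $\smallint$. So it suffices to compute $\smallint(I,X)$ and $\smallint(I,Y)$ in $S$ and show they differ. Here $X$ is regular on $S$ and on $S/(I S)$ (and likewise $Y$), so Theorem~\ref{hyp} (the Hypersurface Section Formula) applies over the intermediate rings, giving $\smallint (I,X) = ((\smallint I),X)$ and $\smallint (I,Y) = ((\smallint I),Y)$ as ideals of $S$. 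Since $I$ is not licci, $\smallint I \neq (1)$ by definition, in fact $\smallint I \subseteq m \subsetneq S$; thus $((\smallint I),X)$ and $((\smallint I),Y)$ are two distinct proper ideals of $S$ (they differ because $X \in ((\smallint I),X)$ but $X \notin ((\smallint I),Y)$, as $\smallint I$ involves only the variables of $R$). Distinct $\smallint$ forces distinct linkage classes, completing the contrapositive.

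The main obstacle, and the reason the theorem is stated in greater generality than the immediate CM corollary of Theorem~\ref{hyp}, is that $I$ is only assumed \emph{unmixed}, not Cohen--Macaulay, whereas Theorem~\ref{hyp} and the development of $\smallint$ in Section~2 assume $I$ is CM. So I cannot quote Theorem~\ref{hyp} verbatim. The real work is therefore to verify that the needed structural facts survive in the merely-unmixed setting: namely that the first universal link of a hypersurface section $(I,X)$ still has the shape $(H,z)$ with $H$ a universal link of $I$ and $z$ a suitable regular element (the input from \cite[2.2]{HU2} and \cite[2.3]{LP}), and that the inclusion-based argument of Theorem~\ref{hyp} goes through. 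I expect one must re-run the two containment arguments of Theorem~\ref{hyp} tracking only unmixedness of the intermediate links (which is preserved under linkage in a Gorenstein ring), rather than full Cohen--Macaulayness, so that the conclusion $\smallint I \subseteq \smallint(I,X)$ and the reverse inclusion both hold. Once the formula $\smallint(I,X)=((\smallint I),X)$ is secured in this generality, the separation argument above closes the proof.
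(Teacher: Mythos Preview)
Your forward direction matches the paper's. For the converse in the CM case, your computation $\smallint(I,X)=((\smallint I),X)\neq((\smallint I),Y)=\smallint(I,Y)$ via Theorem~\ref{hyp} is exactly the ``immediate consequence'' the paper alludes to just before stating the theorem.

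The divergence is in how the unmixed (non-CM) case is handled. You correctly identify the obstacle---Theorem~\ref{hyp} is proved only for CM ideals---but your proposed fix is to extend the Hypersurface Section Formula itself to the unmixed setting by re-running its proof. That proof leans on the characterization of $\smallint I$ via universal links (Theorem~\ref{basic}(c)), which in turn rests on Proposition~\ref{PUprop} and the specialization machinery of \cite{SOL,AL}; whether all of this goes through with only unmixedness is not something you have verified, and it is not obvious. So as written this is a genuine gap.

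The paper avoids the issue entirely with a bootstrap. Rather than computing $\smallint(I,X)$ exactly, it uses only the trivial containment $\textnormal{Nlicci}((I,X))\subseteq V(\smallint(I,X))$ (which needs no CM hypothesis, since any ideal in the linkage class of $(I,X)$ lies in $\smallint(I,X)$ by definition). From $\smallint(I,X)=\smallint(I,Y)$ and the observation that $P=(m,X)\notin V(\smallint(I,Y))$ (because $Y\in(I,Y)\subseteq\smallint(I,Y)$ while $Y\notin P$), one gets that $(I,X)_P$ is licci. But licci forces CM, so now $I$ is CM after localizing at $P$, and only \emph{then} does the paper invoke Theorem~\ref{hyp} to conclude $IR[X,Y]_P$ is licci, descending to $I$ via \cite[2.12]{AL}. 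The point is that the CM hypothesis needed for Theorem~\ref{hyp} is recovered a posteriori from licci-ness, so no extension of the formula is required.
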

\pf If $I$ is licci, then so is $(I,X)$ and $(I,Y)$, so these two ideals belong to the same 
linkage class.  Conversely, suppose that  $(I,X)$ and $(I,Y)$ are in the same linkage class.
Then $\int(I,X) = \int(I,Y)$ and therefore by Proposition~\ref{Nliccidefideal} (which does not require CM for this containment) 
$$\textnormal{Nlicci}((I,X)) \subseteq V(\int(I,X)) = V(\int(I,Y)).$$ 
Since $P = (m,X) \notin V(\int(I,Y))$ we obtain that $(I,X)_{P}$ is licci.  
In particular, the ideal $(I,X)_{P}$ is CM so we have $IR[X,Y]_{P}$ is CM, and
it follows by Theorem~\ref{hyp} that $IR[X,Y]_{P}$ is licci.  
But by descent \cite[2.12]{AL} we conclude that
$I$ is licci.  
\epf

\section{Liaison of joins} 

In this section we generalize the hypersurface section formula of the previous section.

Our main result be will in the regular case.  
Recall that if $R$ is a regular local ring, two $R$-ideals $I$ and $J$ are 
{\em transversal} if $I\cap J = IJ$.  (Geometrically, this condition implies that the
subschemes defined by $I$ and $J$ meet properly.)

\begin{thm}\label{transversal}  Let $R$ be a regular local ring containing an infinite field 
and let $I$ and $J$ be two transversal CM $R$-ideals, and assume 
that $J$ is deformable to a generic complete intersection.   Then 
$$\int (I+J)    \subseteq (\int I) +  J.$$ 
\end{thm}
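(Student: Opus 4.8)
The plan is to reduce the statement to the hypersurface-section formula of Theorem~\ref{hyp} via a deformation argument. The key obstruction preventing a direct application of Theorem~\ref{hyp} is that $J$ need not be a complete intersection; it is only \emph{deformable} to a generic complete intersection. So first I would invoke the deformation hypothesis to replace $J$ by a generic complete intersection, then link through that complete intersection one generator at a time, reducing to repeated hypersurface sections.

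Concretely, let me think through the mechanism. By hypothesis there is a deformation $(S,J')$ of $(R,J)$ with $J'$ generically a complete intersection. I want to lift the whole configuration: I would look for a deformation of the pair $(R, I+J)$ in which $I$ deforms trivially (to $IS$, say) while $J$ deforms to $J'$, keeping the transversality $I\cap J = IJ$ intact under deformation. Since transversality is an open/flat condition and $I$ is already CM, the regular sequence $\underline{x}$ defining the deformation of $J$ should remain regular on $S/(IS+J')$; this is exactly the content I would need to verify to say $(S, IS+J')$ is a deformation of $(R, I+J)$. Granting this, Lemma~\ref{linksdeform} gives $\int(I+J) \subseteq (\int(IS+J'))R$, so it suffices to prove the desired inclusion for the generically-complete-intersection ideal $J'$ in place of $J$. \textbf{This deformation setup is where I expect the main obstacle to lie}: one must arrange the deformation so that it simultaneously fixes $I$, realizes the generic complete intersection for $J$, and preserves transversality and the relevant depth conditions, and then track how $\int$ behaves under pulling back along $R \to S/(\underline{x})$.

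Having reduced to $J$ generically a complete intersection, the strategy is to exploit that such a $J$ is, locally at its associated primes, generated by a regular sequence, and to link $I+J$ successively. The intent is that each step is controlled by Corollary~\ref{cicase}, which handles the case where the added ideal is an honest complete intersection regular on $R/I$: it gives $\int(I+J'') = (\int I) + J''$ for $J''$ a complete intersection of the right type. Since $J'$ is only \emph{generically} a complete intersection, I cannot apply Corollary~\ref{cicase} globally, but I can apply it after localizing at each associated prime of $\int I + J'$, and then assemble the local containments. Here transversality is essential: it guarantees that the associated primes of $I+J$ and the multiplicities behave compatibly, so that the codimension adds correctly and no spurious components appear when forming the sum. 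The inclusion (rather than equality) in the statement reflects precisely that the local model is only valid at the associated primes, so one recovers only the containment after taking the intersection of the local data.

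To organize the write-up, I would proceed as follows. First, record the deformation $(S, IS+J')$ of $(R, I+J)$ and reduce via Lemma~\ref{linksdeform} to proving $\int_S(IS+J') \subseteq (\int_S IS) + J'$ for $J'$ generically a complete intersection. Second, by Theorem~\ref{basic}(c) it suffices to show $L^e(I+J') \subseteq \big((\int I)+J'\big)R(X)$ for every $e \ge 1$, which by Lemma~\ref{ull} can be checked after localizing at each associated prime $p$ of $(\int I)+J'$. Third, at such a prime, $J'_p$ is a complete intersection regular on $(R/I)_p$ (using transversality to see that $\underline{x}$ stays regular on $R_p/I_p$), so Corollary~\ref{cicase} applies locally to give $\int_{R_p}(I_p + J'_p) = (\int_{R_p} I_p) + J'_p = \big((\int_R I) + J'\big)_p$, using Theorem~\ref{basic}(d) for the last equality. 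Patching these local statements yields the global containment. The delicate point throughout, which I would isolate as a lemma if possible, is the preservation of transversality and of the regularity of the deforming sequence along $R \to S$, since without it the local reduction to Corollary~\ref{cicase} breaks down.
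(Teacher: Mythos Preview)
Your strategy---deform $J$ to a generic complete intersection, then localize at associated primes of $(\int I)+J$ to reduce to Corollary~\ref{cicase}---has the right shape, but two steps fail as written. First, a deformation $(S,J')$ of $(R,J)$ gives a surjection $S\twoheadrightarrow R$, not an inclusion, so ``$IS$'' has no meaning; lifting $I$ to an ideal $\tilde I\subset S$ that remains transversal to $J'$ and for which $\underline{x}$ is regular on $S/(\tilde I+J')$ is exactly the hard part, and you give no mechanism for it. Second, even granting that reduction, your local step does not work: an associated prime $p$ of $(\int I)+J'$ need not contract to a minimal prime of $J'$, so there is no reason $J'_p$ should be a complete intersection---the generic CI hypothesis controls $J'$ only at its \emph{own} associated primes, not at primes coming from $\int I$.

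The paper resolves both issues by a device you do not mention: it first completes and then passes to the \emph{join}. Writing $R\cong k[[X]]$ and $S=k[[Y]]\cong R$ via an isomorphism $\phi$, one forms $T=R\,\hat\otimes_k\,S$ and observes that $(T,(I,\phi(J)))$ is a deformation of $(R,I+J)$ along the regular sequence $X-Y$. In $T$ the two ideals live in disjoint variable sets, so $I$ extends to $T$ trivially and transversality is automatic; the given deformation of $J$ to a generic complete intersection then extends to a deformation of the join with no further work. For the local step, after enlarging $k$ to be algebraically closed, the associated primes of $((\int I),J)$ in $T$ are precisely the ideals $(p,q)$ with $p\in\operatorname{Ass}(R/\int I)$ and $q\in\operatorname{Ass}(S/J)$; one then localizes not at $(p,q)$ but at the larger prime $Q=(m,q)$, where $JT_Q$ \emph{is} a complete intersection because $q$ is minimal over $J$. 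The separation of variables afforded by the join is what makes this factorization of associated primes available, and without it your localization argument cannot get off the ground.
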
 

Before we begin the proof, we would like to discuss some of the consequences of this result. 

First, one should note that if $J$ is a complete intersection, then equality holds, by  Corollary~\ref{cicase}.  However, equality usually will not hold in Theorem~\ref{transversal}.  Indeed, we have the following immediate corollary.

\begin{cor}\label{transversal2}  Let $R$ be a regular local ring containing an infinite field
and let $I$ and $J$ be two transversal CM $R$-ideals that are both
deformable to a generic complete intersections.   Then 
$$\int (I+J)    \subseteq  I+J + [(\int I)  \cap (\int J) ].$$ 
\end{cor}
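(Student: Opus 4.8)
The plan is to deduce everything from Theorem~\ref{transversal} by applying it twice and then performing a short lattice computation with ideals.

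First I would apply Theorem~\ref{transversal} exactly as stated: since $I$ and $J$ are transversal and $J$ is deformable to a generic complete intersection, we get
$$\int(I+J) \subseteq (\int I) + J.$$
Next I would exploit the symmetry of the hypotheses. Transversality is symmetric, since $I \cap J = IJ = JI = J \cap I$; the sum $I+J = J+I$ is unchanged under interchange; and, by assumption, $I$ is \emph{also} deformable to a generic complete intersection. Hence Theorem~\ref{transversal} applies equally with the roles of $I$ and $J$ swapped, giving
$$\int(I+J) = \int(J+I) \subseteq (\int J) + I.$$
Intersecting the two inclusions yields
$$\int(I+J) \subseteq \big[(\int I)+J\big] \cap \big[(\int J)+I\big].$$

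It then remains to identify this intersection. Here I would use that $I \subseteq \int I$ and $J \subseteq \int J$ by Theorem~\ref{basic}(a), and apply the modular (Dedekind) law for ideals twice. Setting $A = \int I$ and $B = \int J$, the containment $I \subseteq A \subseteq A+J$ gives $(A+J) \cap (B+I) = \big[(A+J) \cap B\big] + I$, and the containment $J \subseteq B$ then gives $(A+J) \cap B = (A \cap B) + J$. Substituting produces
$$\big[(\int I)+J\big] \cap \big[(\int J)+I\big] = I + J + \big[(\int I) \cap (\int J)\big],$$
which is precisely the claimed bound (with equality, in fact).

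The substantive input is the double use of Theorem~\ref{transversal}; the closing step is only the modular law, so I do not expect a genuine obstacle there, though I would check the containments $I \subseteq A+J$ and $J \subseteq B$ that license each application. The one point I would take care to articulate is the symmetry justification legitimizing the second invocation of Theorem~\ref{transversal}, since that theorem is not itself symmetric in its two ideals: it is the extra hypothesis that $I$ too is deformable to a generic complete intersection that restores the symmetry.
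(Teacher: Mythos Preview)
Your proof is correct and is exactly what the paper has in mind: it states the result as an ``immediate corollary'' of Theorem~\ref{transversal} without writing out an argument, and the intended derivation is precisely the symmetric double application of Theorem~\ref{transversal} followed by the modular-law computation you give. The care you take in verifying the containments $I\subseteq \int I$ and $J\subseteq \int J$ (from Theorem~\ref{basic}(a)) that license each use of the modular law, and in noting that the extra hypothesis on $I$ restores the symmetry, is appropriate.
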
 

Even  in this refined relation equality need not hold.   For example, if $I$ and $J$ are licci, then equality would mean that $I+J$ is licci, which is usually not the case (see, for instance, Example \ref{ex1} and, more generally, Theorem~\ref{J}).

In the case where $R$ is a regular local ring, we can then give a stronger version of Corollary 3.3 (where the ideal $J$ can be more general than a complete intersection).

\begin{cor}\label{maxim}  Let $R$ be a regular local ring containing an infinite field 
and let $I$ and $J$ be two transversal CM $R$-ideals, and assume 
that $J$ is deformable to a generic complete intersection.
If $I$ is maximal in its linkage class then so is $I + J$. 
\end{cor}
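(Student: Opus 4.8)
The plan is to deduce the corollary formally from the containment of Theorem~\ref{transversal} together with the characterization in Lemma~\ref{maxrem}, which says that (outside the single degenerate case of the maximal ideal of a regular local ring) an ideal is maximal in its linkage class precisely when it is a fixed point of the operator $\int$. So the whole argument reduces to checking the single equality $\int(I+J) = I+J$.

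First I would rule out the degenerate possibility $I = m$. Since $J$ is a nonzero proper ideal, Nakayama's lemma gives $mJ \subsetneq J$; hence if $I$ were the maximal ideal $m$ we would have $I\cap J = J \neq mJ = IJ$, contradicting the transversality hypothesis $I \cap J = IJ$. Thus $I \neq m$, and so the hypothesis that $I$ is maximal in its linkage class combines with Lemma~\ref{maxrem} to give $\int I = I$.

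Next I would invoke Theorem~\ref{transversal}, whose hypotheses (transversality of $I$ and $J$, and $J$ deformable to a generic complete intersection) are exactly those assumed here: it yields $\int(I+J) \subseteq (\int I) + J = I + J$. Since the reverse containment $I+J \subseteq \int(I+J)$ is automatic from Theorem~\ref{basic}(a), I conclude that $\int(I+J) = I+J$. A final application of Lemma~\ref{maxrem} then shows that $I+J$ is maximal in its linkage class, as desired.

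The argument is purely formal once Theorem~\ref{transversal} is available, so the step that actually requires attention---and the one I would be most careful to justify---is the elimination of the case $I = m$, which is what makes Lemma~\ref{maxrem} applicable; this is the analogue of the parenthetical ``since $I$ is not the maximal ideal'' used in the proof of Corollary~\ref{maxprop}. One should also keep in mind that $\int(I+J)$ is only defined when $I+J$ is unmixed, but this is already built into the statement of Theorem~\ref{transversal}.
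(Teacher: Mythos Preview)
Your proposal is correct and follows essentially the same approach as the paper: both argue that $I+J\subseteq\int(I+J)\subseteq(\int I)+J=I+J$ via Theorem~\ref{transversal} and then invoke Lemma~\ref{maxrem}. The only difference is that you supply an explicit justification (via transversality and Nakayama) for why $I\neq m$, whereas the paper simply asserts this in passing.
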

\pf  By Theorem~\ref{transversal},
$$I +J \subseteq \int(I+J) \subseteq  (\int I) + J  = I +  J $$
since $I$ is not the maximal ideal.  Hence 
equality holds, and we are done by Lemma~\ref{maxrem}.
\epf

The combination of Corollary \ref{maxim} and Theorem \ref{PU} then allows one to produce large classes of ideals that are maximal in their linkage classes.

As an application, we can characterize precisely when a transversal sum is licci. 

\begin{thm}\label{J}  Let $R$ be a regular local ring containing an infinite field,
and let $I$ and $J$ be two transversal $R$-ideals, 
 one of which is deformable to a generic complete intersection.  
Then $I+J$ is licci if and only both $I$ and $J$ are licci, and one of them is a complete intersection.
\end{thm}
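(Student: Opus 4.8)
The plan is to prove Theorem~\ref{J} by establishing both implications, with the forward direction (that $I+J$ licci forces strong structure on $I$ and $J$) being the substantive one. I would first dispose of the easy converse: if both $I$ and $J$ are licci and, say, $J$ is a complete intersection, then by Corollary~\ref{cicase} one has $\int(I+J)=(\int I)+J$, and since $I$ is licci, $\int I$ is the unit ideal, whence $\int(I+J)$ is the unit ideal and $I+J$ is licci. (Transversality guarantees that $I+J$ is unmixed and CM, so that the invariant $\int$ is defined and detects licci-ness via Theorem~\ref{basic} and Proposition~\ref{Nliccidefideal}.)

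For the forward direction, suppose $I+J$ is licci, so $\int(I+J)=(1)$. By symmetry assume $J$ is deformable to a generic complete intersection. The key tool is Theorem~\ref{transversal}, which gives
\[
(1)=\int(I+J)\subseteq (\int I)+J.
\]
From this containment I would first extract that $I$ must be licci: the plan is to pass to a localization or to use Proposition~\ref{Nliccidefideal} to argue that if $I$ were \emph{not} licci, then $\int I$ would be a proper ideal, and $(\int I)+J$ could be proper as well (at a prime in $\mathrm{Nlicci}(I)\cap V(J)$, exploiting transversality so that $V(I)\cap V(J)$ is nonempty of the expected codimension), contradicting that the sum is the unit ideal. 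So $\int I=(1)$, i.e. $I$ is licci, and by the symmetric roles one concludes $J$ is licci too.

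The harder part is showing that one of $I,J$ must actually be a complete intersection, not merely licci. Here I expect the main obstacle to lie, since being licci is strictly weaker than being a complete intersection in codimension $\ge 3$. The strategy I would pursue is to apply Corollary~\ref{transversal2}: once both $I$ and $J$ are licci (hence deformable to generic complete intersections, which licci ideals are), we get
\[
\int(I+J)\subseteq I+J+\big[(\int I)\cap(\int J)\big].
\]
If neither $I$ nor $J$ is a complete intersection, I would aim to contradict liccishness of $I+J$ by producing a proper ideal containing the whole linkage class of $I+J$—equivalently, by showing $I+J$ cannot be linked down to a complete intersection. Concretely, I would localize at a minimal prime $p$ of $I+J$: transversality makes $R_p/(I+J)_p$ a sum of two pieces, and the minimal number of generators of $(I+J)_p$ is $\mu(I_p)+\mu(J_p)$, which exceeds its codimension $\mathrm{ht}(I_p)+\mathrm{ht}(J_p)$ unless one factor is locally a complete intersection. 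Since a complete intersection is characterized by having $\mu$ equal to the codimension (generically, via the deformation to generic complete intersection), I would use the generic-link machinery (Proposition~\ref{PUprop}, \cite[2.4]{AL}) to track $\mu$ under links and argue that $I+J$ can reach a complete intersection only if one of the summands already had this minimal generation—i.e. was itself a complete intersection. The delicate point will be making the multiplicity/generator count rigorous under the deformation hypothesis rather than assuming $I$ and $J$ are already generic complete intersections, and this is where I would invest the most care.
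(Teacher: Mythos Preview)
Your converse and the deduction that both $I$ and $J$ are licci are essentially the paper's argument, though you can simplify: in a local ring, once Theorem~\ref{transversal} gives $(1)=(\int I)+J$ with $J$ proper, immediately $\int I=(1)$; there is no need to localize at primes in $\textnormal{Nlicci}(I)\cap V(J)$. One small omission: to ``switch roles'' and conclude $J$ is licci you need $I$ to be deformable to a generic complete intersection, which is not assumed; the paper fills this via Lemma~\ref{def-rem}, since the $I$ you have just shown to be licci is in the linkage class of a complete intersection.

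The genuine gap is the final step, showing that one of $I,J$ must be a complete intersection. The paper does not attempt this with the $\int$-machinery at all; it simply invokes \cite[2.6]{J}. Your proposed route does not work as stated. First, Corollary~\ref{transversal2} is vacuous here: once $\int I=\int J=R$, the right-hand side is all of $R$ and you learn nothing. Second, your generator count at a minimal prime $p$ of $I+J$ correctly gives $\mu((I+J)_p)=\mu(I_p)+\mu(J_p)$ from transversality, but you then need $\mu((I+J)_p)=\textnormal{ht}(I+J)$ to force one summand to be a local complete intersection. Being licci does \emph{not} imply being generically a complete intersection, so there is no reason for $(I+J)_p$ to have this minimal number of generators. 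Finally, ``tracking $\mu$ under links'' via Proposition~\ref{PUprop} and \cite[2.4]{AL} controls $\mu$ of specializations of generic links, not of the original ideal, and gives no mechanism to force $\mu(I_p)=\textnormal{ht}\,I$ or $\mu(J_p)=\textnormal{ht}\,J$. The result you are trying to reprove is a separate, nontrivial theorem about licci sums; you should cite \cite[2.6]{J} as the paper does rather than attempt an ad hoc argument.
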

\pf  
First assume both ideals are licci and one is a complete intersection. Since the ideals are transversal, by \cite[Lemma~2.2]{J2}, one has ${\rm ht}(I+J)={\rm ht}(I)+{\rm ht}(J)$. Since $J =(x_1,\ldots,x_h)$ is a complete intersection
and $I$ is CM, by induction on $h$ one has that $x_1,\ldots,x_h$ form a regular sequence on $R/I$. Then $I+J$ is 
a hypersurface section of a licci ideal, and therefore is licci (e.g. Theorem~\ref{hyp}). 
 For the converse, suppose that 
$I+J$ is licci, and that $I$   is deformable to a generic complete intersection. Since $I+J$ is CM, then so are $I$ and $J$ (see also discussion after Definition \ref{join}). 
 Then by Theorem~\ref{transversal} $$R = \int (I+J) \subseteq I + \int J,$$ 
hence $\int J = R$ and $J$ is licci.   
In particular, $J$ is deformable to a generic complete intersection (see Lemma~\ref{def-rem}),
hence by interchanging the roles, we also conclude that $I$ is licci. 
  The fact that one of $I$ or $J$ must be a complete intersection
now follows by \cite[2.6]{J}.
\epf

To prove Theorem~\ref{transversal}, we reduce to the situation where the sum $I+J$ is a join. 
By this we mean the following:  

\begin{define}\label{join}
let $R$ and $S$ be complete local noetherian $k$-algebras with residue field $k$.
We let $T = R\hat{\otimes}_{k} S$ be their complete tensor product over $k$. 
 Further, let $I$ be an $R$-ideal and $J$ be an $S$-ideal.  
We associate to this pair the $T$-ideal $K$ generated by the extensions of $I$ and $J$ to $T$.  
We denote this ideal by $K = (I,J)$.   We call $K$ the join of $I$ and $J$.
\end{define}

For example, if $S = k[[X]]$ is a power series algebra over $k$, then 
$T \cong R[[X]]$ and one can identify the join $(I,J)$ with the sum $IT+JT$ 
of extended ideals from the two natural subrings $R$ and $S$ of $R[[X]]$. 

We will routinely use the standard facts that the maps $R\rightarrow T$ are flat 
and that therefore if $R$ and $S$ are CM (resp. Gorenstein, regular) then so is $T$.  
Furthermore, $ T/K \cong R/I \ \hat{\otimes}_{k} \ S/J.$
\bigskip

{\em Proof of Theorem~\ref{transversal}}. 
 
We reduce  to the join case.  Without loss of generality, we may assume 
that $R$ is complete.   Indeed, if $\hat{R}$ is the completion of $R$,
then $I\hat{R}$ and $J\hat{R}$ are transversal $\hat{R}$-ideals, 
and $J\hat{R}$ is still deformable to a generic complete intersection, so if the
result is known in the complete case, then
$$
(\int( I+J))\hat{R}  
 = \int(I\hat{R}+J\hat{R}) \subseteq  
(\int I\hat{R}) + J\hat{R} = ((\int I) + J))\hat{R}.$$
and the result now follows for $R$ by faithfully flat descent.  
 
Write $R \cong k[[X]]$.  
Let $S = k[[Y]] \cong R$, and let $\phi:R \lra S$ be the 
$k$-algebra isomorphism sending $X$ to $Y$.  Let $\tilde{I} = I$
and let $\tilde{J} = \phi(J)$.    
Set $T = R\hat{\otimes}_{k} S \cong k[[X,Y]]$.  
  Then 
$(T, (\tilde{I},\tilde{J}))$ is a deformation of $(R,(I+J))$.    
Indeed, the $k$-algebra homomorphism $\pi:T\longrightarrow R$ 
with $\pi(X) = X$ and $\pi(Y) = X$ has kernel the regular sequence 
generated by the entries of $X-Y$ and $\pi((\tilde{I},\tilde{J})) 
 = I + J$.   If the result is known in the join case,  by
Lemma~\ref{linksdeform}, we have
 $$\int(I+J)   \subseteq  (\int (\tilde{I},\tilde{J}))R  
\subseteq  (( \int \tilde{I}), \tilde{J})R  
 =    (\int I)  + J. $$

Thus to complete the proof, we may assume that $I+J$ is a join. 
In this case, we are able to prove the result under the more general setting
that $R$ and $S$ are Gorenstein $k$-algebras.

\begin{prop}  \label{max} 
Let $I$ be a CM $R$-ideal and let $J$ be a CM $S$-ideal that is deformable to a generic 
complete intersection.    Then 

$$\int(I,J) \subseteq ( (\int I), \ J). $$ 
\end{prop}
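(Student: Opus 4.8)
The plan is to prove the containment $\int(I,J) \subseteq ((\int I), J)$ by exploiting the minimality characterization of $\int(I,J)$ from Theorem~\ref{basic}(c). Since $J$ is deformable to a generic complete intersection, I would first replace $J$ by a generic complete intersection deformation $J'$ in $S' = S[\underline{z}]$ (or the appropriate extension), and use Lemma~\ref{linksdeform} to reduce to the case where $J$ itself is generically a complete intersection in $S$. The point is that $\int(I,J) \subseteq (\int(I,J'))R$, so it suffices to establish the containment $\int(I,J') \subseteq ((\int I), J')$ with $J'$ generically a complete intersection, and then specialize the right-hand side back down. The hypothesis that $J$ is deformable to a generic complete intersection is present precisely to enable this reduction, so the first paragraph of the proof would carry it out carefully, tracking that the join structure is preserved under the deformation.

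Having reduced to $J$ generically a complete intersection, the heart of the argument is to produce, for each $e \ge 1$, universal links $L^e((I,J))$ that are controlled by $\int I$ and $J$. By Theorem~\ref{basic}(c), it suffices to show $L^e((I,J)) \subseteq ((\int I), J)T(W)$ for every $e$, where $W$ denotes the generic variables used in forming universal links. The natural approach mirrors the proof of the Hypersurface Section Formula (Theorem~\ref{hyp}): there, the single regular element $x$ played the role that $J$ plays here, and one showed inductively that $L^e((I,x)) = (H_e, z_e)$ with $H_e$ linked to $I$ in $e$ steps and $z_e$ lying in a sum of previous links together with $(I,x)$. Here I expect an analogous structural result: a first universal link $L^1((I,J))$ of the join should decompose as a join-type ideal $(H, J')$ where $H$ is a first universal link of $I$ (extended to the join ring) and $J'$ is again generically a complete intersection lying in $(I,J)$. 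This is the join analogue of the Huneke--Ulrich decomposition \cite[2.2]{HU2} that was invoked for hypersurface sections, and verifying it is where the genericity of $J$ and the flatness of the join construction must be combined.

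The induction then proceeds as in Theorem~\ref{hyp}: one shows that for every $e$ there is a universal link $L^e((I,J)) = (H_e, J_e)$ with $H_e$ linked to $IT(W)$ in $e$ steps (so $H_e \subseteq \int(IT(W)) = (\int I)T(W)$ by Theorem~\ref{basic}(d) applied to the flat extension $T$ of $R$) and with $J_e$ contained in the sum of the earlier $H_i$'s together with the original join $(I,J)$. The containments $H_i \subseteq (\int I)T(W)$ and $J \subseteq ((\int I),J)T(W)$ then combine to give $L^e((I,J)) = (H_e, J_e) \subseteq ((\int I), J)T(W)$, and the minimality property (c) of $\int(I,J)$ delivers the desired inclusion.

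The main obstacle will be establishing the join-analogue of the Huneke--Ulrich decomposition of the first universal link, namely that $L^1((I,J))$ splits as $(H, J')$ with $H$ a universal link of $I$ and $J'$ again a generic complete intersection contained in $(I,J)$. In the hypersurface case this rested on a concrete matrix manipulation, and for a general $J$ (even a generic complete intersection) the combinatorics of the generic matrix defining the link of the join $(I,J)$ are more involved, since the generators of $J$ interact with those of $I$ in the defining regular sequence. I would expect to isolate the block of generic variables coupling to the $J$-generators, use the generic-complete-intersection hypothesis to keep that block's link trivial in the appropriate sense (so that it contributes the $J'$ summand without disturbing the $H$ part), and thereby reduce to the already-understood behavior of universal links of $I$. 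Making this splitting precise, while ensuring that $J' \subseteq (I,J)$ and that $J'$ remains generically a complete intersection suitable for the next inductive step, is the technical crux on which the whole proof rests.
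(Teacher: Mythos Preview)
Your first reduction step---replacing $J$ by a generic complete intersection deformation $J'$ and invoking Lemma~\ref{linksdeform}---is correct and matches the paper's argument exactly.

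The main argument, however, has a genuine gap. You propose a global decomposition $L^{1}((I,J)) = (H, J')$ with $H$ a universal link of $I$ and $J'$ again generically a complete intersection, in analogy with the Huneke--Ulrich formula for hypersurface sections. You correctly identify this as ``the technical crux on which the whole proof rests,'' but you give no argument for it, and there is no reason to expect such a splitting to hold when $J$ is merely generically a complete intersection rather than an honest one. In the hypersurface case the single extra generator $x$ contributes one extra row and one extra column to the generic matrix, and an explicit row operation isolates it; once $J$ has more generators than its codimension, the block of generic variables coupling to $J$ no longer produces a complete intersection link of $J$, so the ``$J'$ summand'' you want is simply not there in the form you describe. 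The inductive scheme you outline therefore never gets off the ground.

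The paper avoids this obstacle entirely. After your common reduction to $J$ generically a complete intersection, it makes a further reduction to $k$ algebraically closed (so that associated primes of the join $((\int I),J)$ are themselves joins $(p,q)$ of associated primes), and then checks the desired containment \emph{locally} at each associated prime of $((\int I),J)$. Every such prime lies in some $Q = (m,q)$ with $q$ minimal over $J$; at $Q$ the ideal $JT_{Q}$ is an honest complete intersection, so $K_{Q} = (I,J)T_{Q}$ is a hypersurface section of $IT_{Q}$, and Corollary~\ref{cicase} gives $\int_{T_{Q}} K_{Q} = ((\int I),J)T_{Q}$ directly. Since $\int$ commutes with localization (Theorem~\ref{basic}(d)), this suffices. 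The point is that no new structural result about universal links of joins is needed: localization converts the generic complete intersection hypothesis into an actual complete intersection, where the hypersurface section formula already does all the work.
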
 
\pf     
 
Without loss of generality, we may assume that $I$ is not licci.
  Let $K = (I,J)$.  
By hypothesis, there is a $k$-deformation $(S',J')$ of $(S,J)$ with $J'$ is generically a 
complete intersection.  By  completing $S'$ if necessary we may assume 
that $S'$ is complete.    If $T' = R\hat{\otimes}_{k} S'$ and
$K' = (I, J')$ is the corresponding join, then 
$(T', K') $ is a  $k$-deformation of $(T, K)$. 
Indeed, by induction,  it suffices to show this when dim $S'$ = dim $S +1$, and
if $a\in S'$ is regular on $S'$ and $S/J'$ with  
$(S'/(a), (J',a)/(a))\cong (S,J)$,  then $a\in T'$ is regular on $T'$ and on
$T'/K' \cong R/I \ \hat{\otimes}_{k} \ S'/J'$ and  
$(T'/(a), (K',a)/(a)) \cong (R \hat{\otimes}_{k} S, (I,J',a)/(a)) \cong (T,K).$  
Now if the result is known for $K'$ then by 
Lemma~\ref{linksdeform}, 
$$\int K \subseteq (\int_{T'} K')T \subseteq ((\int I), J')T = ((\int I), J).$$   
Therefore we may assume without loss of generality that $J = J'$ is generically a complete 
intersection.
   
We may also assume that $k$ is algebraically closed.  
Indeed, if the result is known in this case, we let $\overline{k}$ be the algebraic closure 
of $k$, and replace $R$ and $S$ by $R' = R\hat{\otimes}_{k} \overline{k}$, 
and $S' = S\hat{\otimes}_{k}\overline{k}$;  in this case 
$J ' = JS'$ is still generically complete intersection and $I' = IR'$ is still not licci.    
Thus by faithful flatnessness, the containment descends from $\overline{k}$ to $k$. 

    Let $C =\int I$.  To verify that $\int K \subseteq 
(C, J)$, it suffices to show this locally at 
every associated prime of $(C,J)$.   Since this latter ideal is a join, by flatness we have 
\begin{eqnarray*}  
Ass (T/(C,J)) & = & Ass (R/C \  \hat{\otimes}_{k} \ S/J) \\
&  = & \bigcup_{p\in Ass (R/C)} Ass ((R/C \ \hat{\otimes}_{k} \ S/J)/p(R/C) \ \hat{\otimes}_{k} \ S/J)\\
&  = & \bigcup_{p\in Ass (R/C)} Ass ( R/p \ \hat{\otimes}_{k} \  S/J)\\
& = &  \bigcup_{p\in Ass (R/C)}\bigcup_{q\in Ass (S/J)} Ass ( R/p \ \hat{\otimes}_{k}\ S/q)\\  
& = &  \bigcup_{p\in Ass (R/C)}\bigcup_{q\in Ass (S/J)} \{(p,q)\},
\end{eqnarray*}
the last equality since $k$ is algebraically closed \cite[7.5.7]{EGA}.   In particular, every such associat{}ed prime is contained a prime $Q = (m, q)$.   
To verify the containment then, it suffices to verify the containment locally at every such $Q$. 

 Locally at $Q$,  $JT_{Q}$ is a complete intersection, so $K_{Q}$ is a hypersurface section 
of $IT_{Q}$. 
Therefore, by Corollary~\ref{cicase}, 
$$\int_{T_{Q}} K_{Q}  =   (\int_{T_{Q}} IT_{Q}) + JT_{Q} =  ((\int I), \ J)T_{Q}.$$ 
This establishes the claim, and the proof is complete. 
\epf
  
The condition in Theorem~\ref{transversal} and its corollaries, that an
 ideal admits a generic
complete intersection deformation, is a somewhat rather weak requirement.   
Other than generic complete intersections themselves,  for example, in a regular ring, this includes 
any (CM) monomial ideal (``polarization''), or a determinantal ideal of the expected codimension.  This also holds when the ideal is linked to a generic complete intersection, by the following remark. 

\begin{lem} \label{def-rem} Let $R$ be a local Gorenstein ring and let $I$ be a CM $R$-ideal that is in the linkage class of a generic complete intersection.
 Then $I$ has a deformation to a generic complete intersection.
 \end{lem}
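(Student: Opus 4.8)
The plan is to realize the desired deformation as an iterated generic link of the generic complete intersection sitting at the far end of the linkage class, using the simultaneous deformation statement \cite[2.17]{SOL}. By hypothesis there is a sequence of links $I_e \sim I_{e-1}\sim\cdots\sim I_0 = I$ in which $I_e$ is generically a complete intersection (read the given sequence starting from the generic complete intersection). I would then produce the required deformation $(S,J)$ of $(R,I)$ by taking $S=R[X]_q$ and $J = L_e(I_e)_q$ for a suitable prime $q$, so the whole argument rests on two facts: that generic links preserve the property of being generically a complete intersection, and that the generic links in this sequence simultaneously deform the honest links $I_{e-j}$.

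The key step is to show that if $N$ is a CM $R$-ideal that is generically a complete intersection, then its first generic link $L_1(N)$ is again generically a complete intersection. Write $L_1(N) = (\underline\alpha)R[X] :_{R[X]} NR[X]$ with $\underline\alpha$ the generic regular sequence of Definition~\ref{defgeneric}, where the number of $\alpha_i$ equals $g = \mathrm{grade}(N)$. Since $(\underline\alpha)$ is a complete intersection of grade $g$, every minimal prime $P$ of $L_1(N)$ has height $g$ and lies over a minimal prime of $(\underline\alpha)$. If such a $P$ contained $NR[X]$, then $P$ would be a minimal prime of $N$; as $N$ is generically a complete intersection, $N_P$ is minimally generated by $g$ elements, and the $g$ generic combinations $\underline\alpha$ then generate $N_P$, forcing $L_1(N)_P = N_P : N_P = R[X]_P$ and contradicting $P\supseteq L_1(N)$. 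Hence no minimal prime of $L_1(N)$ contains $NR[X]$, so at each minimal prime $P$ one has $L_1(N)_P = (\underline\alpha)_P$, a complete intersection. Iterating, every generic link $L_j(I_e)$ is generically a complete intersection, and this is preserved under the localization at $q$ introduced below.

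Finally I would apply \cite[2.17]{SOL} to the sequence $I_e \sim \cdots \sim I_0 = I$: there is a prime $q\in\mathrm{Spec}\,R[X]$ containing $m$ such that $(R[X]_q, L_j(I_e)_q)$ is a deformation of $(R,I_{e-j})$ for every $j$. Taking $j=e$ shows that $(R[X]_q, L_e(I_e)_q)$ is a deformation of $(R,I)$, and by the previous step $L_e(I_e)_q$ is generically a complete intersection. This is exactly a deformation of $I$ to a generic complete intersection, completing the argument.

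I expect the main obstacle to be the key step of the second paragraph: verifying that the generic link of a generically complete intersection ideal remains generically a complete intersection, equivalently that the generic link is geometric away from the components of $N$. The crux is the claim that the $g$ generic combinations $\underline\alpha$ generate $N_P$ at a minimal prime $P\supseteq N$; this is the standard genericity input underlying the theory of generic links, and it is precisely where the hypothesis that $N$ is generically a complete intersection (so that $\mu(N_P)=g$) is used. Note also that without this hypothesis the conclusion fails, since at a minimal prime where $N_P$ is not a complete intersection the generic link acquires a non--complete-intersection component, which is why the assumption that $I$ lies in the linkage class of a \emph{generic} complete intersection is essential.
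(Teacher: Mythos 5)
Your proposal is correct and follows essentially the same route as the paper: reverse the linkage sequence so it starts at the generic complete intersection, invoke \cite[2.17]{SOL} to realize $I$ as a specialization of the localized $e$-th generic link, and reduce to the fact that generic linkage preserves the property of being generically a complete intersection. The only difference is that the paper cites this last fact from \cite[2.5]{DCG}, whereas you supply a direct (and correct) argument for it.
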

\pf  
   If $I$ can be linked to an ideal $J$ in $n$ steps, 
then by \cite[2.17]{SOL} there is a generic link $L_{n}(J)\subseteq R[X]$ of $J$ and a  prime 
$Q$ of $R[X]$ such that  $L_{n}(J)_{Q}$ is a deformation of $I$.   Hence by induction it
suffices to show that the property of being a generic complete intersection is 
preserved from an ideal to a first generic link, which is proved in \cite[2.5]{DCG}.  
\epf

We next apply the join result Proposition~\ref{max} to give strong obstructions for two joins to belong to the same linkage class.

\begin{cor} \label{thm1}  
Let $I$ and $I'$ be CM $R$-ideals,  let $J$ and $J'$ be CM  $S$-ideals,
and suppose that 
$(I, J)$ lies in the same linkage class  as $(I',J')$.
\be \item[(a)]  Suppose that $J$ can be deformed to a generic complete intersection 
and that $J'\nsubseteq J$.  Then $I$ is licci. 
\item [(b)] Suppose that  $J \neq J'$  are both deformable to generic complete intersections.
Then either $I$ or $I'$ is licci.  
\item [(c)] Suppose that all the ideals are   deformable to generic complete intersections
and that  $(I,J) \neq (I',J')$.  Then one of the ideals is licci.  
\ee
\end{cor}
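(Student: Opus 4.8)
The plan is to prove part (a) in full and then obtain (b) and (c) from it by elementary logical manipulation together with the symmetry of the join construction. The two engines are Theorem~\ref{basic}(b), which tells us that $\int(I,J) = \int(I',J')$ as soon as the two joins lie in a common linkage class, and Proposition~\ref{max}, which bounds the linkage invariant of a join from above. For part (a) I would argue by contraposition: assuming $J$ is deformable to a generic complete intersection and that $I$ is \emph{not} licci, I aim to show $J' \subseteq J$, which is exactly the negation of the hypothesis $J' \nsubseteq J$.

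Concretely, since $(I,J)$ and $(I',J')$ share a linkage class, Theorem~\ref{basic}(b) gives $\int(I',J') = \int(I,J)$, while Theorem~\ref{basic}(a) yields $(I',J') \subseteq \int(I',J')$ and Proposition~\ref{max} (applicable because $J$ deforms to a generic complete intersection) gives $\int(I,J) \subseteq ((\int I), J)$. Chaining these, and using $J' \subseteq (I',J')$, I obtain the containment of $T$-ideals
\[ J' \subseteq ((\int I),\, J). \]
Now I would push this down along the natural surjection $\pi\colon T \to T/mT \cong S$, where $m$ denotes the maximal ideal of $R$; here I use that $T = R\hat{\otimes}_{k} S$ reduces to $(R/m)\hat{\otimes}_{k} S = S$. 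Since $I$ is not licci, $\int I$ is a proper $R$-ideal and hence $\int I \subseteq m$, so $\pi$ annihilates both $IT$ and $(\int I)T$, while $\pi(JT) = J$ and $\pi(J'T) = J'$. Applying $\pi$ to the displayed containment therefore gives $J' \subseteq J$, completing the contrapositive. I expect this reduction step --- correctly identifying $T/mT$ with $S$ and verifying that $\int I$ (not merely $I$) lands in $m$ --- to be the only genuinely delicate point; everything else is formal.

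With (a) in hand, I would deduce (b) by contraposition and symmetry: if both $I$ and $I'$ were non-licci, then applying (a) to the pair $(I,J),(I',J')$ gives $J' \subseteq J$, while applying it to the same pair with the primed and unprimed data interchanged (legitimate since ``same linkage class'' is symmetric, and $J'$ also deforms to a generic complete intersection) gives $J \subseteq J'$; hence $J = J'$, contradicting the hypothesis $J \neq J'$. For (c) I would invoke the symmetry of the join in the two factors $R$ and $S$: because Proposition~\ref{max} holds verbatim with the roles of $I$ and $J$ exchanged, part (b) has a mirror statement in which $I \neq I'$ (both deformable) forces $J$ or $J'$ to be licci. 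Assuming none of the four ideals is licci, part (b) forces $J = J'$ and its mirror forces $I = I'$, so $(I,J) = (I',J')$, against the standing hypothesis. Thus at least one of the ideals must be licci.
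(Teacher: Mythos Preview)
Your proof is correct and follows essentially the same route as the paper: the paper argues that if $I$ is not licci then Proposition~\ref{max} forces $(I',J')\subseteq (m,J)$, whence $J'\subseteq J$, and then invokes symmetry for (b) and (c). Your version simply makes explicit the use of Theorem~\ref{basic}(a),(b), the containment $\int I\subseteq m$, and the reduction modulo $mT$ that justifies the implication $(I',J')\subseteq (m,J)\Rightarrow J'\subseteq J$.
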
  
\pf If $I$ is not licci then Proposition~\ref{max} implies that $(I',J')\subseteq (m,J)$, which can only occur when  $J'\subseteq J$. The rest follows by symmetry. 
\epf

For generic complete intersections one can show a slightly stronger form of the previous corollary.

\begin{cor}\label{thm2}
Let $I$ and $I'$ be CM $R$-ideals,  let $J$ and $J'$ be CM  $S$-ideals 
of the same codimension that are generic complete intersections. 
If
$(I, J)$ lies in the same linkage class  as $(I',J')$ and $J\neq J'$ 
   then $I$ and $I'$ are licci. 

In particular, if in addition all the ideals are generic complete intersections
and $I\neq I'$,  then all of the ideals are licci.  
\end{cor}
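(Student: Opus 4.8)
The plan is to deduce this from the join obstruction in Corollary~\ref{thm1} together with a local comparison of non-licci loci, the point of the extra hypotheses being that they allow one to upgrade the conclusion of Corollary~\ref{thm1}(b) (``one of $I,I'$ is licci'') to ``both are licci.'' Since the hypotheses $(I,J)\sim(I',J')$, $J\neq J'$, and ``$J,J'$ are generic complete intersections of the same codimension'' are all symmetric under interchanging $(I,J)$ and $(I',J')$, it suffices to prove that $I$ is licci; the licci-ness of $I'$ then follows by the identical argument with the roles reversed. So I would argue by contradiction and suppose $I$ is not licci. As $J$ is a generic complete intersection it is in particular deformable to one, so Corollary~\ref{thm1}(a), read contrapositively, forces $J'\subseteq J$, and since $J\neq J'$ this containment is strict.

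Next I would exploit that $J$ and $J'$ are unmixed of the same codimension $g$. From $J'\subseteq J$ one has $V(J)\subseteq V(J')$, and any minimal prime $q$ of $J$ satisfies $\operatorname{ht} q=g=\operatorname{ht} J'$ and $q\supseteq J'$, hence is a minimal prime of $J'$ as well. Fix such a $q$ and set $Q=mT+qT$; this is prime because $T/Q\cong S/q$ is a domain, and $Q\cap R=m$. I would then compare the two localizations at $Q$. Because $q$ is a minimal prime of both $J$ and $J'$ and each of $J,J'$ is generically a complete intersection, $JT_Q$ and $J'T_Q$ are complete intersections, so $(I,J)_Q=IT_Q+JT_Q$ and $(I',J')_Q=I'T_Q+J'T_Q$ are complete-intersection sections of $IT_Q$ and $I'T_Q$ respectively. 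Now $R\to T_Q$ is a flat local Gorenstein extension (local since $Q\cap R=m$) with $IT_Q\neq T_Q$, so Theorem~\ref{basic}(d) gives $\int_{T_Q}IT_Q=(\int_R I)T_Q\subseteq mT_Q$, whence $IT_Q$ is not licci; meanwhile $I'T_Q$ is licci because $I'$ is. Applying Corollary~\ref{cicase} locally at $Q$ then yields that $(I,J)_Q$ is not licci while $(I',J')_Q$ is.

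Finally, since $(I,J)$ and $(I',J')$ lie in the same linkage class, Theorem~\ref{basic}(b) gives $\int(I,J)=\int(I',J')$, so by Proposition~\ref{Nliccidefideal} their non-licci loci coincide: $\textnormal{Nlicci}((I,J))=V(\int(I,J))=V(\int(I',J'))=\textnormal{Nlicci}((I',J'))$. This is the desired contradiction, since $Q$ lies in the first locus but not the second, so $I$, and by symmetry $I'$, must be licci. For the concluding ``in particular'' I would invoke the corollary a second time with the two join factors exchanged: the join is symmetric, so regarding $I,I'$ as generically complete intersection ideals of the same codimension with $I\neq I'$ produces, by the part just proved, that $J$ and $J'$ are licci as well, and all four ideals are licci. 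The main obstacle is the local analysis at $Q$: one must verify that the generic complete intersection hypothesis genuinely makes the $J$-parts complete intersections after localizing at $Q$, that the relevant generators remain a regular sequence on $T_Q/IT_Q$ so that Corollary~\ref{cicase} applies, and that the non-licci property of $I$ survives the flat extension $R\to T_Q$ via Theorem~\ref{basic}(d)---these are precisely what separate the two localizations and drive the contradiction.
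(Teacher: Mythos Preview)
Your approach is essentially the paper's---compare non-licci loci at a prime of the form $Q=(m,q)$ with $q$ a minimal prime of $J$, using that $J$ and $J'$ become complete intersections there---but there is a logical slip in how you set up the contradiction. You write that by symmetry it suffices to prove $I$ is licci, and plan to deduce $I'$ licci afterwards by the same argument with roles reversed. Then, midway through the proof that $I$ is licci (assuming for contradiction it is not), you assert ``$I'T_Q$ is licci because $I'$ is.'' But at this point you have not established that $I'$ is licci; that was supposed to come \emph{after} proving $I$ licci. As written, the argument is circular.

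The fix is the one you already flag in your opening sentence but never actually deploy in the body: invoke Corollary~\ref{thm1}(b). Since $J\neq J'$ are both (deformable to) generic complete intersections, Corollary~\ref{thm1}(b) says one of $I,I'$ is licci; under your standing assumption that $I$ is not, this gives $I'$ licci, and then your local computation at $Q$ goes through. This is precisely how the paper organizes the proof: it first reduces via Corollary~\ref{thm1} to showing ``if $I'$ is licci then so is $I$,'' and only then runs the Nlicci comparison. Your use of Corollary~\ref{thm1}(a) to obtain $J'\subsetneq J$ and hence that $q$ is also a minimal prime of $J'$ is a nice way to make explicit why $J'T_Q$ is a complete intersection (the paper leaves this implicit, recovering $q\supseteq J'$ from $Q\in\textnormal{Nlicci}(K')$), but it does not substitute for the missing appeal to part~(b).
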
  
\pf By Corollary~\ref{thm1}, it suffices to show that if $I'$ is licci then so is $I$. 
Suppose that $I$ is not licci.  Since $K = (I,J)$ belongs to the same linkage class
as $K' = (I',J')$, we have $\int K = \int K'$ and hence Nlicci$(K)$ = Nlicci$(K')$.
Let $q\in V(J)$ be a minimal prime and set $Q = (m,q).$  Since $R\hookrightarrow T_Q$ is a faithfully flat and $I$ is not licci, then by \cite[2.12]{AL} $I_Q$ is not licci.  Similarly, since $J_q$ is a complete intersection, then $JT_{Q}$ is a complete intersection, thus $K_{Q}$ is a hypersurface section of $IT_{Q}$. Since $I_Q$ is not licci, then $K_Q$ is not licci, i.e. $Q\in$ Nlicci($K)$.
Hence $Q\in $ Nlicci($K')$ 
and therefore $I'$ is not licci, since $J'T_{Q}$ is a complete intersection.
  \epf

  In the remainder of this section, we apply these results to the 
CI-liaison classes of ruled joins.

Let $k$ be an algebraically closed field.    
Given closed ACM subschemes $X\subseteq\psn$ and $Y\subset\psm$, we denote the ruled join of $X$ and $Y$ by $J(X,Y)$.  
This is a subscheme of $\mathbb P^{n+m+1}$ 
consisting of the union of 
all lines joining points of $X$ and $Y$, considered 
as embedded in $\mathbb P^{n+m+1}$ as disjoint 
subschemes in the natural way.  
  If $I_{X}\subseteq k[x_{0},..., x_{n}]$ and $I_{Y}\subseteq k[y_{0},..., y_{m}] $ are
the homogeneous ideals of $X$ and $Y$, then $J(X,Y)$ has ideal $(I_{X},I_{Y})\subseteq k[x_{0},...,x_{n},y_{0},...,y_{m}]$.

\begin{prop} \label{joinprop1} Let $X, X'\subseteq\psn$ be ACM subschemes, one of which is not licci 
locally at the vertex of its affine cone, and let
  $Y \neq Y'\subseteq\psm$ be generic complete intersection ACM subschemes.  
Then
$J(X,Y)$ is not in the same CI-liaison class as $J(X',Y')$ in $\mathbb P^{n+m+1}$.   
\end{prop}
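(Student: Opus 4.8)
The plan is to translate the geometric statement about ruled joins into the affine-algebra language of the preceding corollaries, and then apply Corollary~\ref{thm2} directly. First I would pass from the projective/homogeneous setting to the local one: write $R' = k[x_0,\dots,x_n]$, $S' = k[y_0,\dots,y_m]$, with homogeneous ideals $I_{X}, I_{X'} \subseteq R'$ and $I_{Y}, I_{Y'} \subseteq S'$, so that by the description recalled just before the statement, $J(X,Y)$ has homogeneous ideal $(I_X, I_Y)$ and $J(X',Y')$ has homogeneous ideal $(I_{X'}, I_{Y'})$ in the polynomial ring $A = k[x_0,\dots,x_n,y_0,\dots,y_m]$. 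The vertex of the affine cone corresponds to the irrelevant maximal ideal $\mathfrak{n} = (x_0,\dots,x_n,y_0,\dots,y_m)$, and two ACM subschemes lie in the same CI-liaison class in $\mathbb{P}^{n+m+1}$ precisely when their homogeneous ideals lie in the same (graded) linkage class, which after localizing at $\mathfrak{n}$ means the localized ideals lie in the same linkage class in $R = A_{\mathfrak{n}}$. The phrase ``not licci locally at the vertex of its affine cone'' is exactly the statement that, say, $(I_X)_{\mathfrak{n}}$ is not licci in $R$.

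Next I would set up the join structure. After localizing, $R$ is a regular local ring containing the field $k$, and I want to realize it as the relevant completed-tensor-product join so that Corollary~\ref{thm2} applies. Here I would invoke the observation after Definition~\ref{join}: completing at $\mathfrak{n}$, the ring $\hat{A}$ is a power series ring $k[[x,y]]$ which is the join $\hat{R}'' \,\hat{\otimes}_k\, \hat{S}''$ of the two completed local polynomial rings in the $x$-variables and $y$-variables, and the extended ideals of $I_X$ and $I_Y$ become exactly the extensions used in Definition~\ref{join}. Thus $J(X,Y)$ localized and completed at the vertex is the join $(I_X, I_Y)$ in the sense of the previous section, and likewise for $J(X',Y')$; moreover liaison, licci-ness, and the property of being a generic complete intersection all descend/ascend through the faithfully flat maps $R \to \hat{R}$ (using \cite[2.12]{AL} for licci-ness, as already done in the proof of Corollary~\ref{thm2}).

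With this dictionary in place, the proof is essentially a citation. The hypotheses give that $I_Y$ and $I_{Y'}$ are generic complete intersections of the same codimension with $Y \neq Y'$, hence $I_Y \neq I_{Y'}$ as $S$-ideals, while one of $I_X, I_{X'}$ is not licci. If $J(X,Y)$ and $J(X',Y')$ were in the same CI-liaison class, then their completed local join ideals would lie in the same linkage class, and Corollary~\ref{thm2} would force both $I_X$ and $I_{X'}$ to be licci, contradicting the hypothesis. I would argue the contrapositive: were the joins CI-liaison equivalent, Corollary~\ref{thm2} applied to the join ideals (with $J = I_Y \neq I_{Y'} = J'$ generic complete intersections of the same codimension) yields that both $I_X$ and $I_{X'}$ are licci locally at the vertex, contradicting that one of them is not. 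Hence the joins cannot lie in the same CI-liaison class.

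The main obstacle I anticipate is purely bookkeeping rather than conceptual: verifying carefully that the homogeneous/graded notion of CI-liaison class for ACM subschemes of $\mathbb{P}^{n+m+1}$ matches the local linkage class after localizing and completing at the vertex, and that the ruled join's homogeneous ideal $(I_X, I_Y)$ genuinely becomes the abstract join of Definition~\ref{join} under completion. One must check that the regular sequences effecting the links can be chosen homogeneously (so graded linkage localizes correctly) and that the completed tensor product $k[[x]] \,\hat{\otimes}_k\, k[[y]] \cong k[[x,y]]$ identifies the two families of variables as coming from the two disjoint projective spaces — this is where the ``disjoint subschemes in the natural way'' hypothesis on the ruled join is used. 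Once this identification is pinned down, everything else follows formally from Corollary~\ref{thm2}.
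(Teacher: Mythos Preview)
Your proposal is correct and follows exactly the paper's approach: the paper's proof consists of the single sentence ``This follows immediately from Corollary~\ref{thm2}.'' Your write-up simply makes explicit the graded-to-local-to-complete translation that the paper leaves tacit, and correctly identifies that the only content beyond the citation is this bookkeeping.
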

\begin{proof} This follows immediately from Corollary~\ref{thm2}.
\end{proof}
 
We use this to enumerate many CI-liaison classes. Let $\mathcal L_{c}(\mathbb P^N)$ denote the set of CI-liaison classes of ACM subschemes of codimension $c$ in $\mathbb P^N$.

\begin{cor} \label{ACM} Let $Y \subseteq{\mathbb P}^{4}$ be a reduced curve  that is not licci locally at the vertex of its affine cone.   Then the ruled join with $Y$ induces a set-theoretic embedding 
$$j_{Y}: \textnormal{ACM}^{\circ}_{c}(\psn)\longhookrightarrow \mathcal L_{c+3}(\mathbb P^{n+5})$$ 
from the set of 
generic complete intersection ACM subschemes of  
  codimension $c$ in $\psn$ to the set of CI-liaison classes of ACM subschemes
of codimension $c+3$ in ${\mathbb P}^{n+5}$.
\end{cor}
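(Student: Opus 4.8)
The plan is to realize $j_{Y}$ as the assignment sending a generic complete intersection ACM subscheme $X\subseteq\psn$ of codimension $c$ to the CI-liaison class of the ruled join $J(X,Y)\subseteq\mathbb{P}^{n+5}$, and to check that this map is both well defined and injective. For well-definedness I must verify that $J(X,Y)$ really represents a point of $\mathcal{L}_{c+3}(\mathbb{P}^{n+5})$, i.e. that it is ACM of codimension $c+3$. Since $I_{X}$ and $I_{Y}$ involve disjoint sets of variables their heights add, so $\operatorname{codim}(I_{X},I_{Y})=\operatorname{codim}(I_{X})+\operatorname{codim}(I_{Y})=c+3$ (here $Y$, a curve in $\mathbb{P}^{4}$, has codimension $3$). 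For the Cohen--Macaulay property I would use the identification of homogeneous coordinate rings $k[\underline{x},\underline{y}]/(I_{X},I_{Y})\cong (k[\underline{x}]/I_{X})\otimes_{k}(k[\underline{y}]/I_{Y})$ together with the standard fact (recorded for complete tensor products in the discussion after Definition~\ref{join}) that the tensor product over a field of two Cohen--Macaulay graded $k$-algebras is again Cohen--Macaulay; since $X$ and $Y$ are ACM, so is $J(X,Y)$. This step is routine.

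The crux is injectivity: assuming $X\neq X'$, I must show $J(X,Y)$ and $J(X',Y)$ lie in distinct CI-liaison classes. Here the essential point is a role reversal. Proposition~\ref{joinprop1} is phrased with the (possibly) non-licci factor in the first position and the \emph{varying} generic complete intersections in the second; in our situation, however, the fixed non-licci factor is $Y$ while the varying factors are $X,X'$, and these latter are generic complete intersections that may well be licci. So the proposition does not apply with the factors in the order displayed in $J(X,Y)$. The remedy is that the join construction is symmetric in its two arguments: the defining ideal $(I_{X},I_{Y})$ does not depend on the order of the factors, so $J(X,Y)$ and $J(Y,X)$ denote one and the same subscheme of $\mathbb{P}^{n+5}$. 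I would therefore apply Proposition~\ref{joinprop1} with the two groups of factors interchanged, taking both first factors equal to the fixed curve $Y\subseteq\mathbb{P}^{4}$, which is not licci locally at the vertex of its affine cone by hypothesis, and taking the second factors to be the distinct generic complete intersections $X\neq X'\subseteq\psn$.

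With the hypotheses of Proposition~\ref{joinprop1} (equivalently Corollary~\ref{thm2}) thus arranged, and since $X\neq X'$, the proposition yields that $J(Y,X)$ and $J(Y,X')$ are not in the same CI-liaison class in $\mathbb{P}^{4+n+1}=\mathbb{P}^{n+5}$. By the symmetry of the join these are exactly $J(X,Y)$ and $J(X',Y)$, so $j_{Y}$ is injective, which completes the argument.

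I expect the main (and essentially the only) obstacle to be the bookkeeping around this role reversal: one must recognize that the non-licci hypothesis is needed on the fixed factor $Y$, not on the varying $X$'s, and that it is legitimate to feed $Y$'s ambient $\mathbb{P}^{4}$ into Proposition~\ref{joinprop1} as the factor carrying a non-licci subscheme precisely because the join is symmetric. Once this is seen, the codimension count and the Cohen--Macaulayness of the join are standard, and no new linkage-theoretic input beyond Proposition~\ref{joinprop1} is required.
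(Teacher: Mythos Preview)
Your proposal is correct and matches the paper's (implicit) argument: the corollary is stated without proof because it follows immediately from Proposition~\ref{joinprop1} by the symmetry of the ruled join, exactly as you describe. Your additional remarks on well-definedness (codimension count and Cohen--Macaulayness of the join) and the explicit role reversal needed to feed the fixed non-licci $Y$ into the slot carrying the non-licci hypothesis are accurate and complete the routine details the paper leaves to the reader.
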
 
 
\begin{rem}
We can modify the join map to produce non-reduced subschemes 
in smaller dimensions.   Let $J\subseteq k[y_{0},y_{1},y_{2}]$ be an ideal that is not licci locally at the 
irrelevant maximal ideal.  (For example, $J = (y_{0},y_{1},y_{2})^{2}$, cf. \cite[2.1]{HU2}).  Then if $j_{J}$ denotes the map 
taking $X$ to the subscheme defined by the join of the ideal of $X$ and $J$, we have an induced set-theoretic embedding 
$$j_{J}:\textnormal{ACM}^{\circ}_{c}(\psn)\longhookrightarrow \mathcal L_{c+3}(\mathbb P^{n+3}).$$
\end{rem} 

In the special case where we take the join of $J$ (as above) with hypersurfaces of degree $d$ in $\mathbb P^n$, we obtain the following embedding.
\begin{example} For every integer $d\ge 1$, there is an embedding 
$$j_{J}:{\mathbb P}^{\binom{n+d}{d} -1 }\longhookrightarrow \mathcal L_{4}(\mathbb P^{n+3}).$$ 
\end{example}

\section{Liaison and automorphisms} 
In this section we wish to construct linkage classes of ideals $I$ in a power series ring $R$  for which the rings
$R/I$ are all isomorphic, by considering the action via automorphisms.  
 
If $R$ is any ring, we let  Aut$(R)$ denote the group of automorphisms of $R$
and let $G\subseteq $ Aut$(R)$ be a subgroup.  
We denote the action of $g\in G$ on an $R$-ideal $I$ by $gI$.   Since $R/gI\cong R/I$, 
the ideal $gI$ inherits most interesting properties from $I$. 

\begin{lem}
Let $R$ be a Gorenstein local ring and let $I$ and $J$ be unmixed $R$-ideals that are in the same linkage 
class.  Then $gI$ and $gJ$ are in the same linkage class, for any automorphism $g$ of $R$. 
In particular, $$\int gI = g\int I.$$  
\end{lem}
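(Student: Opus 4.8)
The plan is to prove the two assertions in sequence, deriving the formula $\int gI = g\int I$ from the first statement about linkage classes. First I would establish that an automorphism $g$ of $R$ respects the operation of direct linkage. The key observation is that if $(\underline{\alpha})$ is a complete intersection realizing a direct link between unmixed ideals $I$ and $J$ (so $(\underline{\alpha})\subseteq I\cap J$ with $I = (\underline{\alpha}):J$ and $J = (\underline{\alpha}):I$), then applying $g$ to everything yields that $(g\underline{\alpha})$ is again a complete intersection (since $g$ is a ring automorphism, it sends regular sequences to regular sequences), and the colon relations are preserved: $gI = (g\underline{\alpha}):gJ$ and $gJ = (g\underline{\alpha}):gI$. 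The latter follows because an automorphism commutes with the colon operation, i.e. $g(A:B) = gA:gB$ for any ideals $A, B$. Thus $gI$ and $gJ$ are directly linked via $(g\underline{\alpha})$.

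Given this, if $I$ and $J$ lie in the same linkage class, there is a finite chain of direct links $I = I_0 \sim I_1 \sim \cdots \sim I_n = J$, and applying $g$ termwise produces a chain $gI = gI_0 \sim gI_1 \sim \cdots \sim gI_n = gJ$, proving that $gI$ and $gJ$ are in the same linkage class. In particular $g$ induces a bijection on the set of linkage classes, sending the class of $I$ to the class of $gI$.

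For the formula $\int gI = g\int I$, I would split into the licci and non-licci cases according to the definition of $\int$. Note first that $I$ is licci if and only if $gI$ is licci: by the above, $g$ maps the linkage class of any complete intersection to the linkage class of $g$ applied to that complete intersection, which is again a complete intersection of the same codimension, hence the same linkage class. So $I$ is licci precisely when $gI$ is, and in that case both $\int gI$ and $g\int I$ equal the unit ideal (since $g(R) = R$). When $I$ is not licci, $\int I$ is by definition the sum of all ideals in the linkage class of $I$, so $g\int I = g\left(\sum_{I' \sim I} I'\right) = \sum_{I'\sim I} gI'$, using that an automorphism commutes with arbitrary sums of ideals. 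By the first part of the lemma, as $I'$ ranges over the linkage class of $I$, the ideals $gI'$ range over exactly the linkage class of $gI$ (this uses that $g$ is a \emph{bijection} on linkage classes, so no ideal in the class of $gI$ is missed). Therefore $\sum_{I'\sim I} gI' = \int gI$, completing the proof.

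The step I expect to require the most care is verifying that $g$ induces a genuine bijection on the linkage class, rather than merely mapping it into the linkage class of $gI$; this surjectivity is what guarantees equality (not just containment) of the two sums. It follows formally by applying the same argument to the inverse automorphism $g^{-1}$, which shows every ideal linked to $gI$ is of the form $gI'$ with $I'$ linked to $I$. The remaining verifications — that $g$ preserves regular sequences, colon ideals, and commutes with sums — are routine consequences of $g$ being a ring automorphism, and I would not dwell on them.
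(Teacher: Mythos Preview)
Your proposal is correct and follows essentially the same approach as the paper: reduce to the case of a single direct link and use that an automorphism preserves complete intersections and ideal quotients. The paper's proof is terser and leaves the formula $\int gI = g\int I$ implicit, whereas you spell out the licci/non-licci case split and the bijectivity argument; this added care is appropriate and matches the intended reasoning.
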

\pf
 By induction, it suffices to show the result when $I$ and $J$ are directly linked.  
In this case, the result follows immediately from the fact that an
automorphism takes complete intersections to complete intersections and preserves ideal quotients.  
\epf

The above lemma shows that any group $G \subseteq $ Aut$(R)$   induces an
action on $\mathcal L_{c}$. 
Now let $I$ be a CM $R$-ideal, and let $[I]$ denote its linkage class.

To have more room to maneuver, as in our earlier study of joins, we again embed the ideal into a flat extension.

\begin{lem}\label{autprop}
Let $(R,m)$ be a local Gorenstein ring with infinite residue field and let 
$I$ be a non-licci $m$-primary
$R$-ideal.  Let $T$ be a flat local Gorenstein extension of $R$ 
with reduced special fiber.   Let $G$ be a group of automorphisms of $T$ such that $mT$ has
trivial stabilizer. 
Then  the orbit map $orb_{I}:G\lra  G\cdot [IT] $  is bijective.
\end{lem}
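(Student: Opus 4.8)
The plan is to prove bijectivity by separating surjectivity, which is immediate, from injectivity, which carries all the content. Surjectivity holds because $G\cdot[IT]$ is by definition the image of $orb_{I}$. For injectivity, note that $orb_{I}(g_{1})=orb_{I}(g_{2})$ means $g_{1}IT$ and $g_{2}IT$ lie in the same linkage class; applying the automorphism $g_{2}^{-1}$ together with the preceding lemma (which shows automorphisms of a Gorenstein local ring preserve linkage classes, here applied to $T$) shows that $h\,IT$ and $IT$ lie in the same linkage class, where $h=g_{2}^{-1}g_{1}$. Thus the problem reduces to showing that the stabilizer of the class $[IT]$ in $G$ is trivial: I would fix such an $h$ and aim to prove $h=1$.

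The key is to pin down $\int IT$ precisely enough that fixing its linkage class forces $h$ to fix $mT$. First I would observe that, since $I$ is $m$-primary and not licci, $\textnormal{Nlicci}(I)\subseteq V(I)=\{m\}$ and $m\in\textnormal{Nlicci}(I)$, so $\textnormal{Nlicci}(I)=\{m\}$; by Proposition~\ref{Nliccidefideal} this gives $V(\int_{R}I)=\{m\}$, i.e. $\int_{R}I$ is $m$-primary and $\sqrt{\int_{R}I}=m$. Since $IT\subseteq mT\neq T$, Theorem~\ref{basic}(d) yields $\int_{T}IT=(\int_{R}I)T$. Choosing $N$ with $m^{N}\subseteq\int_{R}I\subseteq m$ gives $m^{N}T\subseteq(\int_{R}I)T\subseteq mT$, so that $\sqrt{\int_{T}IT}=\sqrt{mT}$. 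The hypothesis that the special fiber $T/mT$ is reduced now enters: it says exactly that $mT$ is a radical ideal, and therefore $\sqrt{\int_{T}IT}=mT$.

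With this computation the conclusion follows quickly. By the preceding lemma applied to $T$, one has $\int hIT=h\int IT$, while Theorem~\ref{basic}(b) gives $\int hIT=\int IT$ because $hIT$ and $IT$ share a linkage class; hence $h(\int IT)=\int IT$ as ideals. Since an automorphism commutes with formation of radicals, $h(mT)=h(\sqrt{\int IT})=\sqrt{h(\int IT)}=\sqrt{\int IT}=mT$, so $h$ stabilizes $mT$. The triviality-of-stabilizer hypothesis then forces $h=1$, hence $g_{1}=g_{2}$, proving injectivity.

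The main obstacle, and the point where both nontrivial hypotheses are genuinely used, is the radical identity $\sqrt{\int_{T}IT}=mT$. The non-licci hypothesis on the $m$-primary ideal $I$ is what makes $\int_{R}I$ a proper $m$-primary ideal, so that its radical is $m$ rather than the unit ideal, while the reduced special fiber is precisely what upgrades $\sqrt{mT}$ to $mT$ itself. Together these ensure that an automorphism fixing the ideal $\int IT$ must fix $mT$ exactly, which is what links the linkage-theoretic hypothesis to the group-theoretic one.
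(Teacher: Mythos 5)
Your proof is correct, and it reaches the key conclusion by a genuinely different mechanism than the paper. The paper also reduces to showing the stabilizer of $[IT]$ is trivial, but then argues locally: for each associated prime $q$ of the (reduced) ideal $mT$ it uses flat ascent of non-licciness \cite[2.12]{AL} to get $q\in\textnormal{Nlicci}(IT)$, deduces $gIT\subseteq\int(gIT)=\int(IT)\subseteq q$, intersects over all such $q$ to get only the one-sided containment $gIT\subseteq mT$, and must then contract to $R$, use that $g^{-1}(mT)\cap R$ is radical and contains the $m$-primary ideal $I$, and finally run a Noetherian ascending-chain argument on $mT\subseteq g^{-1}(mT)\subseteq g^{-2}(mT)\subseteq\cdots$ to upgrade the containment to the equality $g(mT)=mT$. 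You instead compute the invariant globally: $\int_R I$ is $m$-primary by Proposition~\ref{Nliccidefideal}, Theorem~\ref{basic}(d) gives $\int_T IT=(\int_R I)T$, and squeezing between $m^NT$ and $mT$ yields $\sqrt{\int_T IT}=mT$ (reducedness of the fiber entering exactly here). Because $h$ fixes the ideal $\int IT$ on the nose (by the preceding lemma combined with Theorem~\ref{basic}(b)), taking radicals gives $h(mT)=mT$ immediately, with no contraction step and no chain stabilization. What your route buys is a shorter, equality-based argument; what it costs is reliance on the base-change statement Theorem~\ref{basic}(d), whereas the paper's proof only needs the localization direction and \cite[2.12]{AL}. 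Both arguments implicitly use that $IT$ is a CM $T$-ideal (so that $\int$ and the Nlicci locus apply to it), which holds since the closed fiber of the flat Gorenstein extension is Cohen--Macaulay; you may want to say this explicitly, but the paper's own proof is no more careful on this point.
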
 
\pf  We must show that the stabilizer of $[IT]$ is trivial.   Suppose that there is a sequence
of links $IT =I_{0}\sim \cdots \sim I_{e} = gIT$ joining $IT$ and $gIT$ for some $g\in G$.  
Let $q$ be an associated prime of $mT$.  Then $IT_{q}$ is also not licci \cite[2.12]{AL}, so 
$q$ belongs to the nonlicci locus of $IT$.  Hence
by Proposition~\ref{Nliccidefideal}, $g(IT)\subseteq q$.  Since this holds for every associated prime 
of the special fiber, which is reduced, it follows that $g(IT)\subseteq mT$.  Hence
$I\subseteq IT\subseteq g^{-1}(mT)$, so $I\subseteq g^{-1}(mT)\cap R$.  Since the latter ideal
is reduced, and $I$ is $m$-primary, it follows that $m =  g^{-1}(mT)\cap R$, hence  
$mT\subseteq g^{-1}(mT)$.  Therefore $mT\subseteq g^{-1}(mT) 
\subseteq g^{-2}(mT)\subseteq \cdots$ and hence $g^{-n}(mT) = g^{-n-1}(mT)$ holds for some
$n\ge 0$.  Thus $g(mT) = mT$ and by hypothesis we  conclude that $g = 1$, as required. 
\epf

In order to apply Lemma~\ref{autprop},  
we restrict our attention to the power series ring over a field.  
 
Let $T = k[[X,Y_{1},\ldots,Y_{n}]]$ be a power series ring in $n+1$ variables over a field $k$. 
By the formal inverse function theorem, the group of all $(n+1)\times (n+1)$ 
(lower) unitriangular matrices over $T$ has a representation as a group of automorphisms of $T$,
acting by matrix multiplication on the vector $(X,Y_{1},\ldots,Y_{n})^{t}.$

\begin{prop}\label{aut} Let $R =k[[Y_{1},\ldots,Y_{n}]]$ be a formal power series 
over a field $k$ and   
 let $I$ be a non-licci $m$-primary ideal and let
$T = k[[X,Y_{1},\ldots,Y_{n}]]$.    Then 
there is a natural set-theoretic embedding 
 $$k[[X]]^{n}\longhookrightarrow \mathcal L_{n}$$  
into the set of linkage classes of 1-dimensional $T$-ideals. 
\end{prop}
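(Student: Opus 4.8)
The plan is to realize the parameter space $k[[X]]^n$ as a group $G$ of automorphisms of $T=k[[X,Y_1,\dots,Y_n]]$ and then to invoke Lemma~\ref{autprop} for the flat extension $R\hookrightarrow T$. The point is that essentially all the work has already been done in Lemma~\ref{autprop}; what remains is to exhibit a group, parametrized by $k[[X]]^n$, that acts on $mT$ with trivial stabilizer.

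First I would construct $G$. Using the unitriangular representation discussed just above the statement, for each $\underline{f}=(f_1,\dots,f_n)\in k[[X]]^n$ I define the automorphism $g_{\underline{f}}$ of $T$ fixing $X$ and sending $Y_i\mapsto Y_i+f_iX$ for $1\le i\le n$ (the lower unitriangular matrix whose only nontrivial off-diagonal entries are $f_1,\dots,f_n$, placed in the first column). The explicit inverse $Y_i\mapsto Y_i-f_iX$ shows each $g_{\underline f}$ is an automorphism, and the composition rule $g_{\underline{f}}\circ g_{\underline{g}}=g_{\underline{f}+\underline{g}}$ shows that $\underline{f}\mapsto g_{\underline{f}}$ is a group isomorphism of $(k[[X]]^n,+)$ onto a subgroup $G\subseteq\mathrm{Aut}(T)$.

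Next I would verify the hypotheses of Lemma~\ref{autprop}. The ring $R=k[[Y_1,\dots,Y_n]]$ is regular local with infinite residue field (the standing convention), $I$ is non-licci and $m$-primary by assumption, and $T=R[[X]]$ is a flat local Gorenstein (indeed regular) extension of $R$ whose special fiber $T/mT\cong k[[X]]$ is a domain, hence reduced. The one genuine verification is that $G$ stabilizes $mT=(Y_1,\dots,Y_n)T$ only trivially: if $g_{\underline f}(mT)=mT$ then each $Y_i+f_iX$ lies in $mT$, so $f_iX\in mT\cap k[[X]]=0$, and therefore $\underline f=0$. This is precisely the step that requires care, and it is the main (if modest) obstacle; it rests on the fact that a power series in $X$ alone lying in $(Y_1,\dots,Y_n)$ must vanish, so that $k[[X]]$ meets $mT$ trivially.

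With these in place, Lemma~\ref{autprop} gives that the orbit map $orb_I\colon G\lra G\cdot[IT]$ is bijective; precomposing with the isomorphism $k[[X]]^n\cong G$ yields the injection $\underline f\mapsto[g_{\underline f}(IT)]$ into the set of linkage classes of $T$-ideals. Finally, a dimension count identifies the target: since $I$ is $m$-primary, $\dim(R/I)=0$, whence $\dim\bigl(T/g_{\underline f}(IT)\bigr)=\dim(T/IT)=1$ for every $\underline f$; thus each class in the image consists of codimension-$n$ (equivalently, one-dimensional) ideals, placing the image in $\mathcal L_n$, as required.
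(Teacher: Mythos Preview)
Your proof is correct and follows essentially the same approach as the paper's own argument: realize $k[[X]]^n$ as the group of automorphisms $Y_i\mapsto Y_i+f_iX$, check that the hypotheses of Lemma~\ref{autprop} hold (reduced special fiber $k[[X]]$, trivial stabilizer of $mT$ via $mT\cap k[[X]]=0$), and conclude. Your write-up is in fact more explicit than the paper's in verifying the flatness/reduced-fiber hypotheses and in making the final dimension count, but the strategy is identical.
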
 
\pf  The additive group $G$ of $k[[X]]^{n}$ is represented by the subgroup of  
unitriangular matrices over $(k[[X]])$ 
with nonzero (nondiagonal) elements on the first column.  Clearly  $G$ has a faithful representation as a group of automorphisms of $T$.  
By Lemma~\ref{autprop} it suffices to verify that $G$ acts with trivial stabilizer on $mT$. 
If we denote the action by $$g\cdot Y_{i} = Y_{i} + \xi_{i},$$ then we must show that 
$$(Y_{1} +\xi_{1},\ldots,Y_{n}+\xi_{n}) = (Y_{1},\ldots,Y_{n})$$
only if all $\xi_{i} = 0.$     Since $\xi_{i}\in k[[X]]$, this is clear. 
\epf  
  
Since there are non-licci ideals ($m$-primary) ideals in any codimension $\ge 3$,
the above such embeddings holds for all $n\ge 3$.  
  
In the special case that we consider linear automorphisms, taking the non-licci ideal $m^{2}$ 
(for $n\ge 3$, e.g. \cite{HU2} or Theorem~\ref{PU}), we obtain the following embedding.

\begin{example} For every $n\ge 3$, there is an embedding 
$${\mathbb A}^{n}\longhookrightarrow \mathcal L_{n}(\psn).$$  
\end{example}


\begin{thebibliography}{99}
\bibitem{EGA} A. Grothendieck, El$\acute{\textnormal{e}}$ments de
g$\acute{\textnormal{e}}$om$\acute{\textnormal{e}}$trie alg$\acute{\textnormal{e}}$brique: IV, 
{\em Publ. Math. I.H.E.S.} 24 (1965).  
\bibitem{DCG} C. Huneke and B. Ulrich,  Divisor class groups and deformations, {\em Amer. J. Math.} 107 (1985), 1265--1303.
\bibitem{SOL} C. Huneke and B. Ulrich, The structure of linkage, {\em Ann. of Math.} 126 (1987), 277--334. 
 \bibitem{AL} C. Huneke and B. Ulrich, Algebraic linkage, {\em Duke.  Math. J.}  56 (1988), 415--429.  
\bibitem{LP} C. Huneke and B. Ulrich, Local properties of licci ideals, {\em Math. Z.} 211 (1992), 129--154.
\bibitem{HU2} C. Huneke and B. Ulrich,  Liaison of monomial ideals, {\em Bull. Lond. Math. Soc.} 39 (2007), no. 3, 384--392. 

\bibitem{J} M. Johnson, Linkage and sums of ideals, {\em Trans. Amer. Math. Soc.} 350 (1998), 1913--1930. 
\bibitem{J2} M. Johnson, Liaison of product varieties, {\em J. Algebra} 268 (2003), 572--602. 
 
\bibitem{MN}  J. Migliore and U. Nagel, Glicci ideals, {\em Compos. Math.} 149 (2013), no. 9, 1583--1591.
\bibitem{PS} C. Peskine and L. Szpiro, Liaison des vari$\acute{\textnormal{e}}$t$\acute{\textnormal{e}}$s alg$\acute{\textnormal{e}}$briques, I, {\em Invent. Math.} 26 (174), 271--302. 
\bibitem{PU} C. Polini and B. Ulrich, Linkage and reduction numbers, {\em Math. Ann.} 310 (1998), 631--651. 
\bibitem{U} B. Ulrich, Rings of invariants and linkage of determinantal ideals, {\em Math. Ann.} 274 (1986), 1--17. 
\bibitem{UU} B. Ulrich, Theory and applications of universal linkage, {\em Commutative Algebra and Combinatorics, (Kyoto, 1985)}, 285-301, Adv. Stud. Pure Math. 
11, North-Holland, Amsterdam, 1987.    
\bibitem{W} H.-J. Wang, Links of symbolic powers of prime ideals, {\em Math. Z.} 
256 (2007), no. 4, 749--756. 
 
\end{thebibliography}
\end{document}